\newcommand{\Div}{\mbox{div}}
\newcommand{\tr}{\mbox{tr}}
\newcommand{\Ric}{\mbox{Ric}}
\newcommand{\Hess}{\mbox{Hess}}
\newtheorem{theorem}{Theorem}[section]
\newtheorem{lemma}[theorem]{Lemma}
\newtheorem{corollary}[theorem]{Corollary}
\newtheorem{proposition}[theorem]{Proposition}
\theoremstyle{remark}
\newtheorem{remark}[theorem]{Remark}
\numberwithin{equation}{section}
\begin{document}

\title[Prescribing the curvature of  manifolds with boundary]{ Prescribing the curvature of Riemannian manifolds with boundary}

\author{ Tiarlos Cruz}
\author{Feliciano Vit\'orio}
\address{Universidade Federal de Alagoas\\
Instituto de Matemática\\
Maceió, AL - Brazil 57072-970, Brazil}
\email{cicero.cruz@im.ufal.br}
 \email{feliciano@pos.mat.ufal.br}

%\keywords{...}

\begin{abstract}

Let $M$ be a compact connected surface with boundary. 
We prove
%,  among other things,
that the signal condition given by the Gauss-Bonnet theorem  is  necessary and sufficient  for a given smooth  
function $f$ on $\partial M$ (resp. on $M$) to be  geodesic curvature of the boundary (resp. the Gauss curvature)  of some flat metric on $M$ (resp. metric on $M$ with geodesic boundary). 
In order to provide  analogous results for this problem with $n\geq 3,$ we prove some topological restrictions  which  imply, among other things, that  any function that is negative somewhere on $\partial M$ (resp. on $M$) is a mean curvature  of a scalar flat metric on $M$ (resp.   scalar curvature of a metric on $M$ and minimal  boundary with respect to this metric).
As an application of our results, we obtain a classification theorem for manifolds with boundary.
\end{abstract}

\maketitle

\section{Introduction}

A natural  problem in differential geometry is to find metrics with prescribed curvature, i.e, construct a Riemannian metric on a given smooth manifold $M$ whose  curvature is equal to a given function $f$ on $M$.

  On closed manifolds, the prescribed Gaussian (resp. scalar) curvature problem has been completely solved by Kazdan and Warner \cite{KW, KW2,KW3}. 
Here we address this problem for manifolds with boundary.
For instance,  let $M$ be a surface with  boundary $\partial M,$ given  a smooth  function $h$ defined on the boundary (or $f$ in the interior), is there a Riemannian metric $g$ such that the geodesic curvature  $\kappa_{g_{\partial M}}=h$ (or Gaussian curvature $K_{g}=f)?$ In fact  such a problem is equivalent to solve a quasilinear system of partial differential equation %for $g$ 
 with boundary conditions.
We point out that, as a particular case, it is possible to solve this problem by  conformal deformation of the metric, which  consists in  picking some metric $g_0$ on $M$ and seeking  a conformally related metric to $g_0,$ say  $g=e^{2u}g_0,$ for some positive function $u$ to be found in order to satisfy 
\begin{equation*}
\left\{
   \begin{array}{rcl}
-\Delta_{g_0} u+2K_{g_0}&=&2fe^u\quad\mbox{ in}\quad M\\
\frac{\partial u}{\partial \nu}+2\kappa_{g_0}&=&2he^\frac{u}{2}\quad\mbox{on}\quad \partial M,
  \end{array}
   \right.
\end{equation*}
where $\Delta_{g_0},$ $K_{g_0}$ and  $\kappa_{g_0}$  are the Laplacian, the Gauss curvature and the geodesic curvature of the boundary of $g_0,$ respectively. Here $\nu$ is the outward unit normal on $\partial M.$ On this subject, the literature is extensive and many  results are known, see  for instance  \cite{E4,EG,S}, that includes the higher dimensional case as  well as the recent works \cite{LR, LMR,CR}.

Suppose  $M$ is a compact two-dimensional Riemannian manifold with boundary $\partial M$. The Gauss-Bonnet theorem states that
$$
\int_{M} Kdv+\int_{\partial M}\kappa d\sigma=2\pi\chi(M),
$$
where $K$ denotes the Gaussian curvature, $\kappa$ is the geodesic curvature of the boundary, $\chi(M)$ is the Euler characteristic, $dv$ is  the element of volume and $d\sigma$ is the element of area. Besides establishing  a link between the topology (Euler characteristic) and geometry of a surface, it also  
gives a necessary  signal condition on the Gaussian curvature of a  surface  or  geodesic curvature on the boundary in terms of its   Euler characteristic.

Consider the following natural consequence given by the Gauss-Bonnet theorem when $M$ is a  bounded domain $\Omega$ in $\mathbb R^2$  with smooth boundary (resp. compact connected 2-manifold with geodesic boundary):
\begin{eqnarray}\label{C1}
& & \mbox{If $\mathcal{X}(M)>0,$ then $\kappa$ (resp. $K$) must be positive somewhere.} \nonumber\\
& & \mbox{If $\mathcal{X}(M)=0,$ then $\kappa$ (resp.  $K$) must change sign unless it is  $\kappa\equiv0$.}\\
& &\mbox{If  $\mathcal{X}(M)<0,$ then $\kappa$ (resp. $K$)  must be negative somewhere. }\nonumber
\end{eqnarray}

In the first result we prove that the obvious signal condition (\ref{C1}) is also sufficient to the problem of prescribing  curvature. More precisely, we state the following theorem:

\begin{theorem}\label{teoa}
Let $\Omega\subset \mathbb{R}^2$ be a bounded domain  with smooth boundary.  
A function $\kappa\in C^{\infty}(\partial \Omega)$ is the geodesic curvature of a flat metric on $\Omega$ if only if $\kappa$ satisfies the signal condition (\ref{C1}).
\end{theorem}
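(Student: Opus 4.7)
The necessity is immediate from Gauss-Bonnet: any flat metric $g$ on $\Omega$ with boundary geodesic curvature $\kappa$ satisfies
\begin{equation*}
\int_{\partial \Omega} \kappa \, d\sigma_g = 2\pi \chi(\Omega),
\end{equation*}
and positivity of the arclength element forces the sign condition (\ref{C1}).

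For sufficiency, my plan is a conformal deformation approach. Any flat metric on $\Omega$ can be written as $g = e^{2u} g_E$ for some $u \in C^\infty(\overline{\Omega})$, where $g_E$ is the Euclidean metric. Flatness of $g$ is equivalent to $\Delta u = 0$ in $\Omega$, and the condition $\kappa_g = \kappa$ on $\partial \Omega$ becomes $\partial_\nu u + \kappa_E = \kappa \, e^u$ on $\partial \Omega$, where $\kappa_E$ is the Euclidean boundary curvature. Since a harmonic function is determined by its boundary trace, this reduces to the single nonlinear nonlocal equation
\begin{equation*}
\Lambda \phi + \kappa_E = \kappa \, e^\phi \quad \text{on } \partial \Omega,
\end{equation*}
for $\phi = u|_{\partial \Omega}$, where $\Lambda$ is the Dirichlet-to-Neumann map of $(\Omega, g_E)$: a nonnegative self-adjoint pseudodifferential operator of order one whose kernel consists of the constants.

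I would then split into cases by $\chi(\Omega)$, noting that a bounded smooth planar domain is a disk with finitely many open disks removed, so $\chi(\Omega) \in \{1, 0, -1, -2, \ldots\}$, and apply Kazdan-Warner style methods adapted to the boundary. For $\chi(\Omega) < 0$, I would first solve a simpler auxiliary problem with $\kappa$ replaced by a strictly negative function (where the naturally associated energy is coercive and admits a minimizer), then deform back to $\kappa$ by continuity, using the Gauss-Bonnet identity as an a priori integral constraint. For $\chi(\Omega) = 0$, I would introduce a subcritical perturbation, solve it variationally for each parameter, and pass to the limit exploiting the sign change of $\kappa$ to prevent blow-up; the case $\kappa \equiv 0$ reduces to the linear Neumann problem $\Lambda \phi + \kappa_E = 0$, whose compatibility $\int_{\partial\Omega}\kappa_E \, d\sigma_E = 0$ holds by Gauss-Bonnet. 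The principal obstacle is the disk case $\chi(\Omega) = 1$, where only $\kappa > 0$ somewhere is hypothesized: the nonlinearity $\kappa\, e^\phi$ is critical with respect to the trace Sobolev embedding $H^{1/2}(\partial \Omega) \hookrightarrow \exp L^2$, so minimizing or min-max sequences may concentrate into bubbles and compactness is not automatic. I would invoke a Moser-Trudinger type inequality for the half-Dirichlet energy $\int_{\partial \Omega} \phi \Lambda \phi \, d\sigma_E$ combined with an Aubin-style concentration-compactness analysis; equivalently, the Riemann mapping identifies $(\Omega, g_E)$ conformally with the unit disk, reducing $\Lambda$ to $|\partial_\theta|$ on $S^1$ and bringing the problem within reach of Chang-Yang type techniques on the circle.
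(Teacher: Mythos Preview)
Your approach is genuinely different from the paper's, and in the disk case it has a real gap.

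The paper does \emph{not} stay in a fixed conformal class. Its argument runs as follows: by the Osgood--Phillips--Sarnak uniformization theorem there is a flat metric $g_0$ on $\Omega$ with \emph{constant} boundary geodesic curvature $\kappa_0$, whose sign matches $\chi(\Omega)$. Given $\kappa$ satisfying the sign condition, one finds $c>0$ with $\min(c\kappa)\le \kappa_0\le \max(c\kappa)$, then invokes the Approximation Lemma (Lemma~\ref{appr}) to produce a diffeomorphism $\varphi$ of $\Omega$ with $\|c(\kappa\circ\varphi)-\kappa_0\|_{W^{1/2,p}(\partial\Omega)}$ arbitrarily small. The local surjectivity Theorem~\ref{teols} (a Fischer--Marsden implicit function theorem for $g\mapsto(R_g,2\kappa_g)$, using injectivity of $\mathcal S_{g_0}^*$ from Proposition~\ref{prop1}, after a small perturbation of $g_0$ if necessary) then yields a flat metric $g_1$ with $\kappa_{g_1}=c(\kappa\circ\varphi)$; pulling back by $\varphi^{-1}$ and rescaling gives the desired metric. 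The diffeomorphism step is essential: it lets one solve only \emph{near} the constant-curvature model.

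Your conformal route, by contrast, searches only inside the Euclidean conformal class. For $\chi(\Omega)\le 0$ this may well succeed along the lines you sketch, but for the disk there is a genuine Kazdan--Warner--type obstruction: if $u$ is harmonic on the unit disk and $\partial_\nu u + 1 = \kappa e^{u}$ on $S^1$, then integrating against the first Steklov eigenfunctions (equivalently, using the noncompact conformal group of the disk) forces identities of the form $\int_{S^1} (\partial_\theta \kappa)\, x_i\, e^{u}\, d\theta = 0$. Hence there exist smooth $\kappa$ that are positive somewhere---indeed everywhere positive, e.g.\ suitable affine functions of $x_1$ restricted to $S^1$---which are \emph{not} the boundary curvature of any flat metric conformal to $g_E$. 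Chang--Yang/Moser--Trudinger machinery on the circle does not remove this; those existence results require extra hypotheses (symmetry, index/Morse conditions, nondegeneracy of critical points of $\kappa$) beyond ``positive somewhere.'' The paper sidesteps the obstruction entirely by precomposing $\kappa$ with a diffeomorphism before solving; that degree of freedom is exactly what your purely conformal scheme lacks. (Incidentally, your opening claim that \emph{every} flat metric on $\Omega$ is of the form $e^{2u}g_E$ is also not needed and not obviously true for multiply connected $\Omega$; but since you only need to \emph{find} one flat metric, the real issue is the obstruction above, not this claim.)
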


We also prove the following result for manifolds with geodesic boundary.

\begin{theorem}\label{teob}
Let $M^2$ be a compact connected surface with smooth geodesic boundary.  
A function $K\in C^{\infty}(M)$ is the Gaussian  curvature of a metric on $M$ with geodesic boundary if only if $K$  satisfies the signal condition (\ref{C1}).
\end{theorem}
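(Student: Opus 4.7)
Necessity of (1.1) is immediate from Gauss--Bonnet: a geodesic boundary gives $\int_M K\,dv = 2\pi\chi(M)$, forcing $K$ to obey the stated sign. For sufficiency, the strategy is to reduce the problem to Kazdan--Warner's theorem on a closed surface via doubling. First, fix a reference metric $g_0$ on $M$ with geodesic boundary by equipping the topological double $\tilde M = M\cup_{\partial M}M$ with any Riemannian metric, averaging it over the reflection involution $\tau$ to obtain a smooth $\tau$-invariant $\tilde g_0$, and setting $g_0:=\tilde g_0|_M$; since $\partial M$ is the fixed set of an isometry of $\tilde g_0$, it is automatically totally geodesic. Writing the deformed metric as $g = e^{u} g_0$, the conditions $K_g = K$ and $\kappa_g = 0$ become the Neumann problem
\begin{equation*}
-\Delta_{g_0} u + 2K_{g_0} = 2K e^{u} \text{ in } M, \qquad \frac{\partial u}{\partial \nu} = 0 \text{ on } \partial M,
\end{equation*}
the specialization to $\kappa_{g_0}=h=0$ of the system displayed in the Introduction.

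Extending $u$ and $K$ to $\tilde M$ by reflection makes this Neumann problem equivalent to finding a $\tau$-invariant $\tilde u\in C^\infty(\tilde M)$ solving $-\Delta_{\tilde g_0}\tilde u + 2K_{\tilde g_0} = 2\tilde K e^{\tilde u}$, where $\tilde K$ is the $\tau$-invariant extension of $K$. Since $\partial M$ is a disjoint union of circles, $\chi(\tilde M) = 2\chi(M)$, so the hypothesis (1.1) on $K$ translates exactly to the Kazdan--Warner sign condition for $\tilde K$ on $\tilde M$, and a $\tau$-invariant solution $\tilde u$ restricted to $M$ will give the sought conformal factor.

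The main obstacle is producing a $\tau$-invariant solution, since without symmetry the restriction need not satisfy the Neumann condition. The closed-surface equation is $\tau$-equivariant, so I expect Kazdan--Warner's existence proofs to descend to the $\tau$-invariant subspace: in the $\chi<0$ regime their sub/super-solution method admits symmetrized barriers; in the $\chi=0$ regime the variational functional is $\tau$-invariant, so minimization can be carried out within the symmetric class; and in the $\chi>0$ regime (where $M$ is a disk and $\tilde M = S^2$), where the proof must contend with the usual obstruction against first spherical harmonics, one uses that the obstruction against the $\tau$-antisymmetric first harmonic vanishes identically by symmetry, combined with the freedom of choosing the reference conformal class within the $\tau$-invariant category (equivalently, invoking an equivariant form of Moser's theorem on $S^2$). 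Restricting the resulting symmetric $\tilde u$ to $M$ and setting $g=e^{u} g_0$ then yields the desired metric with $K_g = K$ and geodesic boundary.
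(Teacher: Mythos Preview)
Your approach via doubling is genuinely different from the paper's. The paper stays on $M$ throughout: it invokes the Osgood--Phillips--Sarnak uniformization theorem to obtain a constant-curvature metric $g_0$ with geodesic boundary, and then applies Proposition~\ref{propfund}(b), which combines the local-surjectivity Theorem~\ref{teols} (an implicit-function-theorem argument for the map $g\mapsto(R_g,2H_g)$, resting on the injectivity of $\mathcal S_g^*$ from Proposition~\ref{prop2}) with the approximation Lemma~\ref{appr} (composing with diffeomorphisms of $M$). No doubling or closed-surface theory enters, and the argument treats the three sign cases uniformly.

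Your route has two weaknesses. The minor one: the reflected function $\tilde K$ is generically only Lipschitz across $\partial M$ (it is $C^1$ only if $\partial K/\partial\nu\equiv0$ on $\partial M$), so Kazdan--Warner's smooth-category statements do not apply verbatim. This is fixable---solve in a lower regularity class on $\tilde M$ and then bootstrap the restriction $u$ on $M$ via elliptic regularity for the Neumann problem, whose data $K,K_{g_0}$ are smooth---but it must be said explicitly.

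The substantive gap is the $\chi>0$ case. When $M$ is a disk and $\tilde M=S^2$, your appeal to an ``equivariant Moser theorem'' is not a citable result: Moser's theorem concerns the antipodal $\mathbb Z_2$-action, not a reflection through an equator, and under your reflection two of the three first spherical harmonics are $\tau$-symmetric, so the Kazdan--Warner obstruction in the conformal (Nirenberg) problem is not killed by symmetry. The remark about ``freedom of choosing the reference conformal class'' is vacuous on $S^2$. Kazdan--Warner's own proof for $S^2$ is not conformal; it uses their diffeomorphism-approximation lemma plus local surjectivity, and you have not argued that the diffeomorphisms produced there can be chosen to commute with $\tau$. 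Without supplying such an equivariant approximation step (or an independent argument), the $\chi>0$ branch of your proposal is incomplete. The paper's intrinsic approach sidesteps all of this by never leaving the category of manifolds with boundary.
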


One of the key ingredients in the proof of Theorem \ref{teoa} and Theorem \ref{teob} is the celebrated  Osgood, Phillips, and Sarnak  uniformization theorem for surfaces with boundary  \cite{OPS}(see also Brendle \cite{Br,Br1}). Namely, if the surface has boundary, in each conformal class of Riemannian metrics, there is a unique uniform metric of type I, i.e.,  a constant curvature metric with zero geodesic curvature, and a unique uniform metric of type II, i.e. the resulting Riemannian manifold $M$ is flat, i.e. the sectional curvature is zero and $\partial M$ has constant geodesic curvature on  the boundary.

In order to generalize Theorem \ref{teoa} and Theorem \ref{teob} we make use of a version of the uniformization theorem in higher dimensions. In this respect,  we have the Yamabe problem for manifolds with boundary  that consists in finding a  conformal metric to the background  one having constant scalar curvature and  minimal boundary or having zero scalar curvature and  constant mean curvature on $\partial M.$ Such a problem  has inspiration in the closed case and it was solved  in almost every case by  
Escobar \cite{E1,E2}. We refer  the interested reader to  Marques \cite{M1,M2},  Almaraz \cite{A},  Brendle and Chen \cite{B} and Mayer and Ndiaye \cite{MN} that studied many of the remaining cases.

Using results of existence of metrics
with constant scalar curvature  and minimal boundary or with zero  scalar curvature whose boundary has constant mean curvature,
 we have the following theorems. 

\begin{theorem}\label{corA}
Let $M^n$, $n\geq 3$, be a compact connected manifold with  boundary.
\begin{itemize}
\item[i)]Any function  on $\partial M$  that is negative somewhere is a mean curvature  of a scalar flat metric on $M$.
\item[ii)]Every  smooth function on $\partial M$  is a mean curvature of a scalar flat metric if and only if $ M$ admits a scalar flat metric with positive constant mean curvature on the boundary.
\end{itemize}
\end{theorem}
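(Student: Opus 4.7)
My plan is to reduce the prescribed mean curvature problem to a nonlinear boundary value problem via conformal deformation, using the boundary Yamabe theorem of Escobar and its completions (Marques, Almaraz, Brendle--Chen, Mayer--Ndiaye, cited in the introduction) as input. These results supply a scalar-flat reference metric $g_0$ on $M$ whose boundary has constant mean curvature $c_0$, the sign of $c_0$ being determined by the boundary Yamabe invariant of $(M,\partial M)$. Writing $g=u^{4/(n-2)}g_0$ with $u>0$, the conformal change formulas translate $R_g\equiv 0$ and $h_g=h$ into the boundary value problem
\begin{equation*}
\begin{cases}
\Delta_{g_0} u = 0 & \text{in } M,\\
\dfrac{2}{n-2}\dfrac{\partial u}{\partial \nu} + c_0\,u = h\,u^{n/(n-2)} & \text{on }\partial M,
\end{cases}
\end{equation*}
so the whole theorem becomes a question of solvability of this problem as $h$ and $c_0$ vary in sign.

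For (i), I would use the method of sub- and super-solutions in the Kazdan--Warner spirit. Because $u$ is harmonic in the interior, the sub/super conditions only need to be checked on $\partial M$. The favorable regime is $c_0\le 0$ together with $h\le 0$ everywhere: a large positive constant is a super-solution, a small positive constant is a sub-solution, monotone iteration produces a classical positive solution, and Hopf's lemma guarantees strict positivity up to the boundary. For the general $h$ that is only negative somewhere, or for the case $c_0>0$, the constant barriers fail and one must assemble non-constant barriers from harmonic extensions whose boundary data are tailored separately on $\{h\ge 0\}$ and $\{h<0\}$. The crucial input is the negativity of $h$ on an open set, which supplies the degree of freedom needed to absorb the nonlinear term in the regions where it is positive. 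Elliptic regularity then gives smoothness of the resulting $u$.

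For (ii), the only-if direction is immediate, since prescribing $h\equiv 1$ produces a scalar-flat metric with positive CMC. For the if direction, the hypothesis supplies $g_0$ with $c_0>0$, so the quadratic form $u\mapsto \int_M|\nabla u|^2 + \tfrac{n-2}{2}c_0 \int_{\partial M} u^2$ is coercive on $H^1(M)$. One can then solve the boundary value problem above for every smooth $h$ — including $h$ strictly positive, which is unreachable in (i) — either by a constrained variational minimization of this form or by a sub/super-solution method in which the super-solution is now a small positive constant (feasible thanks to $c_0>0$) and the sub-solution is built from a rescaled positive first eigenfunction of an associated Robin boundary problem.

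The main obstacle throughout is the critical Sobolev trace exponent: the nonlinearity $h\,u^{n/(n-2)}$ sits exactly at the critical exponent for the embedding $H^1(M)\hookrightarrow L^{2(n-1)/(n-2)}(\partial M)$, so variational approaches are plagued by concentration. The sub/super-solution route sidesteps compactness, but displaces the difficulty onto the construction of a positive super-solution when $h$ is allowed to take positive values; the sign regime for $c_0$ is exactly what controls whether such a super-solution exists, and it is this mechanism that separates (i) from (ii) and makes the characterization in (ii) sharp.
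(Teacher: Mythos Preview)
Your conformal-deformation approach has a genuine gap: by writing $g=u^{4/(n-2)}g_0$ you confine the search to a single conformal class, and within a fixed conformal class there are Kazdan--Warner type obstructions to prescribing the boundary mean curvature at the critical trace exponent. On the Euclidean ball $(\mathbb{B}^n,\delta)$ one has $c_0>0$, so your argument for (ii) would force \emph{every} $h\in C^\infty(S^{n-1})$ to be the mean curvature of a scalar-flat metric conformal to $\delta$; but the conformal group of the ball yields Pohozaev--Kazdan--Warner identities that exclude many such $h$ (this is exactly the boundary analogue of the Nirenberg problem). The theorem is nonetheless true on the ball because the target metric is not required to lie in the Euclidean conformal class. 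Your sketch for (i) has the same defect when $h$ changes sign: the constant barriers fail, and the ``assembled non-constant barriers'' you invoke are precisely where the critical exponent obstructs the construction---no construction is given, and in a fixed conformal class none exists in general.

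The paper's route is essentially different and sidesteps conformal obstructions by working in the full space of metrics. First, the map $g\mapsto(R_g,2H_g)$ is shown to be locally surjective near any $g_0$ at which the adjoint linearization $\mathcal{S}_{g_0}^*$ is injective (Propositions~\ref{prop1}--\ref{prop2} and Theorem~\ref{teols}); this holds for instance when $R_{g_0}=0$ and $H_{g_0}$ is a negative constant. Second, an Approximation Lemma (Lemma~\ref{appr}) produces, for any $H$ whose range brackets $H_0$, a diffeomorphism $\varphi$ of $M$ with $c\,H\circ\varphi$ close to $H_0$ in $W^{1/2,p}(\partial M)$; local surjectivity then yields $g_1$ with $(R_{g_1},H_{g_1})=(0,c\,H\circ\varphi)$, and pull-back by $\varphi^{-1}$ plus rescaling gives the desired metric (Proposition~\ref{propfund}). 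The Escobar-type theory and the sub/super-solution argument you outline do enter the paper, but only to manufacture the reference metrics with constant $H_0$ of the required sign (Section~\ref{restr}), where constant barriers suffice and the critical exponent is harmless.
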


\begin{theorem}\label{corB}
Let $M^n$, $n\geq 3$, be a compact connected manifold with smooth boundary.
\begin{itemize}
\item[i)]Any function on $M$  that is negative somewhere  is a scalar curvature of a metric  with minimal boundary.
\item[ii)]Every  smooth function on $M$  is a scalar curvature of a metric with minimal  boundary with respect to this metric if and only if $M$ admits a metric
with positive constant scalar curvature and minimal boundary.
\end{itemize}
\end{theorem}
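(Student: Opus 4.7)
The plan is to follow the Kazdan--Warner strategy, with Escobar's solution of the boundary Yamabe problem (and its refinements by Marques, Almaraz, Brendle--Chen, and Mayer--Ndiaye) playing the role that uniformization played in Theorems \ref{teoa}--\ref{teob}. Fixing a reference metric $g_0$ on $M$ with minimal boundary, a conformal metric $g=u^{4/(n-2)}g_0$ has scalar curvature $f$ and minimal boundary precisely when the positive function $u$ solves
\begin{equation*}
-\frac{4(n-1)}{n-2}\Delta_{g_0}u+R_{g_0}u=f\,u^{\frac{n+2}{n-2}}\ \text{in}\ M,\qquad \frac{\partial u}{\partial\nu}=0\ \text{on}\ \partial M,
\end{equation*}
so the task reduces to producing a positive smooth solution of this semilinear Neumann problem for a well-chosen reference $g_0$.

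For part (i), I would first establish the auxiliary fact that every compact $M^n$ ($n\geq 3$) with boundary carries a metric $g_0$ with $R_{g_0}\equiv -1$ and minimal boundary: starting from any metric, apply the boundary Yamabe theorem to pass to constant scalar curvature with minimal boundary, and if the resulting constant is nonnegative, conformally deform once more by solving the displayed equation with $f\equiv -1$, which is straightforward because the nonlinearity has the attractive sign and constants are ordered barriers. With such $g_0$ fixed, given $f\in C^\infty(M)$ that is negative somewhere, a small $\varepsilon>0$ with $\varepsilon^{4/(n-2)}\max_M f<-1$ is a sub-solution, and a super-solution dominating $\varepsilon$ can be built from a solution of an appropriate perturbed linear Neumann problem. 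Standard monotone iteration together with elliptic regularity up to the boundary then produces the desired positive smooth $u$, and $u^{4/(n-2)}g_0$ is the required metric.

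For part (ii), the forward implication is immediate: apply the hypothesis to any positive constant. For the converse, fix a metric $\bar g$ with $R_{\bar g}$ a positive constant and minimal boundary, and let $f\in C^\infty(M)$ be arbitrary. If $f$ is negative somewhere, part (i) applies; if $f\equiv 0$, Escobar's theorem supplies a scalar flat metric with minimal boundary; and if $f\geq 0$ with $f>0$ somewhere, I would adapt Kazdan--Warner's deformation method by connecting $\bar g$ to a prospective solution through the family $f_t=(1-t)R_{\bar g}+tf$ for $t\in[0,1]$, solving the prescribed-curvature equation along this path via the implicit function theorem where the linearization is invertible and bridging the degenerate parameters by a priori estimates combined with a degree or blow-up argument.

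The main obstacle is precisely this last step. Because $R_{\bar g}>0$, the linearization of the Neumann problem may develop a nontrivial kernel, so the sub-/super-solution scheme of part (i) does not apply directly, and one must invoke the more delicate analytic machinery developed for the boundary Yamabe problem, in particular compactness, non-concentration at $\partial M$, and classification of blow-up profiles, to guarantee that positive smooth solutions persist all along the deformation $f_t$.
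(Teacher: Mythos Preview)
Your proposal diverges from the paper's method in a way that creates a fixable error in part (i) and a structural obstruction in part (ii).

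\textbf{What the paper actually does.} The paper's engine is not conformal. It proves local surjectivity of the map $g\mapsto(R_g,2H_g)$ on the full space of metrics (Theorem~\ref{teols}, via the Fischer--Marsden linearization and the implicit function theorem), and couples this with an approximation lemma (Lemma~\ref{appr}): given a background $g_0$ with $R_{g_0}=R_0$ constant and minimal boundary, and any $f$ with $\min cf\le R_0\le\max cf$ for some $c>0$, one finds a diffeomorphism $\varphi$ with $c(f\circ\varphi)$ close to $R_0$ in $L^p$, solves $\Psi(g_1)=(c(f\circ\varphi),0)$ near $g_0$, and pulls back by $\varphi^{-1}$ (Proposition~\ref{propfund}). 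Conformal analysis enters only in Section~\ref{restr} to supply the background constants: $R_0<0$ always (Proposition~\ref{proptecn}, via B\'erard-Bergery's non-conformal local deformation), and $R_0>0$, $R_0=0$ under the hypothesis of (ii) (Proposition~\ref{tecn}). With these three backgrounds in hand, Proposition~\ref{propfund} prescribes every $f$.

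\textbf{Part (i).} Your first step---producing $R\equiv-1$ with minimal boundary by conformally deforming a Yamabe minimizer---fails when the Yamabe constant is nonnegative. Integrating $-\tfrac{4(n-1)}{n-2}\Delta u+R_0u=-u^{(n+2)/(n-2)}$ with Neumann data over $M$ gives $R_0\int_M u=-\int_M u^{(n+2)/(n-2)}<0$, impossible for $R_0\ge 0$; so no positive solution exists and constants are certainly not ordered barriers. The existence of such a metric is true but requires leaving the conformal class, which is precisely what the B\'erard-Bergery construction in Proposition~\ref{proptecn} provides. Once that background is available, a sub/super-solution scheme is a legitimate alternative to the paper's argument, though the super-solution when $f$ changes sign still needs an explicit construction rather than a ``perturbed linear Neumann problem''.

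\textbf{Part (ii).} Here the gap is not merely technical. Your plan for $f\ge 0$ with $f>0$ somewhere is to solve the prescribed scalar curvature equation in the fixed conformal class $[\bar g]$ by a homotopy/degree argument. But this is the Nirenberg-type problem with Neumann boundary, and already on the round hemisphere $\mathbb{S}^n_+$ there are Kazdan--Warner--type integral identities obstructing certain positive $f$ from being scalar curvatures of conformal metrics with minimal boundary. No refinement of the blow-up analysis can overcome an identity-level obstruction, so the conformal route cannot prescribe every $f$. The paper sidesteps this entirely: since it works in the full space of metrics, the critical exponent never appears, and the local inverse in Theorem~\ref{teols} together with the diffeomorphism approximation handles all $f$ uniformly.
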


Taking account some  topological restrictions given in Section \ref{restr},  we separate the compact manifolds with boundary
into three groups:

\begin{theorem}\label{classes} 
 Compact manifolds with  boundary and dimension $n\geq 3$  can be divided into three classes:
\begin{itemize}
\item[a)] Any smooth function on $\partial M$ (resp. $M$) is mean curvature  of some  scalar flat metric (resp.   scalar curvature of a metric on $M$ with minimal the boundary with respect to this metric);
\item[b)] A smooth function on $\partial M$ (resp. $M$) is mean curvature  of some  scalar flat metric on $M$ (resp.   scalar curvature of a metric with minimal  boundary with respect to this metric) if and only if it is either identically equal to zero or strictly negative somewhere; furthermore,  any scalar flat metric having zero mean curvature  is totally geodesic (resp. Ricci-flat).
\item[c)] A smooth function on $\partial M$ (resp. $M$) is mean curvature  of some scalar flat  metric 
(resp. scalar curvature of a metric with minimal the boundary with respect to this metric)
if and only if it is strictly negative somewhere.
\end{itemize}
\end{theorem}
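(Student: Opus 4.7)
The plan is to reduce Theorem~\ref{classes} to Theorems~\ref{corA} and~\ref{corB} via a single conformal deformation lemma. I treat the two versions in parallel, focusing on the mean curvature / scalar flat case; the scalar curvature / minimal boundary case is analogous, replacing the boundary Yamabe problem by the interior one.

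I declare class (a) to be those $M$ admitting a scalar flat metric with positive constant mean curvature on $\partial M$. By Theorem~\ref{corA}(ii) this is exactly the class on which every smooth $f\in C^\infty(\partial M)$ is realized. Class (b) consists of those $M\notin(a)$ that nevertheless carry a scalar flat metric with $H\equiv 0$, and class (c) contains the rest. These partition the compact connected manifolds with boundary of dimension $n\geq 3$.

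The central step is the following conformal lemma: \emph{if $(M,g)$ is scalar flat with $H_g\geq 0$ and $H_g\not\equiv 0$ on $\partial M$, then $M$ lies in class $(a)$}. Indeed, applying the boundary Yamabe theorem inside $[g]$ yields $\tilde g = u^{4/(n-2)}g$ with $R_{\tilde g}=0$ and $H_{\tilde g}=c$ constant; since $R_g=0$ the factor $u>0$ is harmonic, and integrating $\Delta_g u=0$ over $M$ gives
\[
c\int_{\partial M} u^{n/(n-2)}\,d\sigma \;=\; \int_{\partial M} H_g\, u\, d\sigma \;>\; 0,
\]
so $c>0$ and $M\in(a)$. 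Combined with Theorem~\ref{corA}(i), which realizes any $h$ negative somewhere, this lemma immediately delivers the realizability claims of (b) and (c): a function $h$ that is nonnegative and not identically zero cannot be realized by a scalar flat metric unless $M\in(a)$, so in (b) the admissible $h$ are precisely those negative somewhere or identically zero (the latter realized by the hypothesized $H\equiv 0$ metric), and in (c) only those negative somewhere.

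The remaining point is the rigidity in (b): any scalar flat metric on a class (b) manifold with $H\equiv 0$ must be totally geodesic (resp.\ Ricci flat). The strategy is to combine the conformal lemma with a first-variation argument for the pair $(R,H)$. Assuming $(M,g)$ is scalar flat with $H\equiv 0$ but $\mathring A\not\equiv 0$, I would produce a deformation $g_t = g + t\dot g$ with $\dot g$ supported near $\partial M$ whose linearization has $\dot R\equiv 0$ and $\dot H\geq 0$, $\dot H\not\equiv 0$; a small conformal correction then yields a nearby metric with $R\equiv 0$ and $H\geq 0$, $H\not\equiv 0$, contradicting $M\notin(a)$ via the lemma. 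The main analytic obstacle is the surjectivity of the linearized $(R,H)$ operator: one must check that its formal $L^2$ adjoint has kernel forcing $\mathring A=0$ on $\partial M$ (respectively $\mathrm{Ric}\equiv 0$ on $M$). This is parallel to the boundary Fischer--Marsden theory developed in the Yamabe problem literature cited above (Escobar, Almaraz, Marques, Brendle--Chen). The topological restrictions from Section~\ref{restr} are then invoked to guarantee that each of the three classes is nonempty, so the trichotomy has content.
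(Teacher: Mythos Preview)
Your definition of the three classes and the reduction of the realizability statements to Theorems~\ref{corA} and~\ref{corB} via the ``conformal lemma'' is exactly the paper's architecture; your lemma is Proposition~\ref{tecn}(b), and your integration-by-parts proof of it is a pleasant direct alternative to the paper's route through Escobar's eigenvalue characterization (Proposition~\ref{esc}), modulo getting the sign conventions for $\mathcal B$ straight.

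Where you diverge, and where your argument is only a sketch, is the rigidity clause in (b). You propose to manufacture a deformation with $\dot R\equiv 0$, $\dot H\geq 0$, $\dot H\not\equiv 0$ and then invoke ``boundary Fischer--Marsden theory'' from the Yamabe literature. In fact the tool you need is already in this paper: Proposition~\ref{prop1}(ii) says precisely that if $R_g=0$, $H_g=0$ and $\Pi_g\not\equiv 0$ then $\mathcal S_g^*$ is injective, so Theorem~\ref{teols} lets you hit $(0,\varepsilon\phi)$ for any small nonnegative nonzero $\phi$, and your conformal lemma then places $M$ in class (a). So your route can be made to work, but you have not actually carried it out, and your reference to external literature is misplaced.

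The paper itself does the rigidity differently and more economically. It computes the first variation of the conformal eigenvalue $\lambda_1(\mathcal B)$ (Proposition~\ref{varieig} in the Appendix). At a scalar-flat metric with $H\equiv 0$ the first eigenfunction is $\psi\equiv 1$ and $\lambda_1(\mathcal B)=0$; taking the variation $h=-\Pi_g$ gives
\[
\frac{d}{dt}\lambda_1(\mathcal B)\Big|_{t=0}=\int_{\partial M}|\Pi_g|^2\,d\sigma,
\]
which is strictly positive unless $\Pi_g\equiv 0$. Then $\lambda_1(\mathcal B,g_t)>0$ for small $t>0$, and Proposition~\ref{esc} produces a conformal scalar-flat metric with $H>0$, contradicting $M\notin(a)$. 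This bypasses the implicit-function machinery entirely; the Ricci-flat statement is handled symmetrically using $\lambda_1(\mathcal L)$ and the direction $h=-\mathrm{Ric}_g$.
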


In short, every  compact manifold with  boundary of dimension $n\geq 3$ admits a  scalar flat metric on $M$ with constant negative mean curvature on $\partial M$ (resp. a metric with constant negative scalar curvature and minimal boundary). Those in item $a)$ or $b)$ are scalar flat  on $M$ and have vanishing mean curvature on the boundary, and those in item $a)$ are scalar flat on $M$ and have constant positive mean curvature on the boundary (resp. constant positive scalar curvature and minimal boundary).

This paper is organized as follows. In Section \ref{preli}, we gather some important preliminary tools, discuss notations
and  formally present the second order linear operator we shall study. 
  In Section \ref{LS},  we prove the remarkable property that the map  $g\mapsto (R_g,2H_{g_{\partial M}})$ is  almost always a surjection, which,  together an approximation lemma contained in Section \ref{pres},  allow us to prove in Section \ref{presc} and \ref{restr}   results concerning what functions can be realized as scalar curvature or mean curvature of the boundary for dimension $n\geq2.$ To be more precise, we prove  Theorem  \ref{teoa} and \ref{teob} in Section \ref{presc} and, discussing some topological obstructions results, we prove Theorem \ref{corA}, \ref{corB} and  \ref{classes} in Section \ref{restr}.
\\

\noindent{\it Ackwnoledgement.} 
  The authors wish to express their gratitude to Almir Santos and  Levi Lima for several discussions and a number of enlightening conversations. We would like to thank the referee for carefully reading  our manuscript and for giving such constructive  comments which helped improving the quality of the paper.
  F.V. was partially supported by CNPq-Brazil and FAPEAL-Brazil. T.C. is grateful to the  ICTP - International Centre for Theoretical Physics, where part of this article was written in January/February 2018, and he would like to thank the Department of Mathematics for its hospitality. 

\section{preliminaries}\label{preli}

Let $M^n$ be $n$-dimensional  compact connected  Riemannian manifold with boundary. 
Let $S^{2,p}_2=W^{2,p}(\mbox{Sym}^2(T^*M))$ denote the section of class $W^{2,p}$ of  symmetric $(0, 2)$-tensors.  For $p>n,$ consider the operator 
$$
\Psi(\cdot):=(R(\cdot), 2H(\cdot)):\mathcal{M}^{2,p}\to L^p(M)\oplus W^{\frac{1}{2},p}(\partial M),
$$
 where  $\mathcal{M}^{2,p}$ denotes the open subset of $S^{2,p}_2$ of the Riemannian metrics on $M.$  Since $R_g$ and  $H_{g_{\partial M}}$  involve derivatives of $g$ up to second order,  by the Sobolev Embedding Theorem, for $p>n$, we have that  $\Psi$  is a $C^{\infty}$ map. 
 Given an infinitesimal variation $h.$ We introduce 
 $$
\delta R_g h:=\frac{\partial}{\partial t}R_{g+th}\Big|_{t=0} 
 $$
 and 
 $$
\delta H_{\gamma} h:=\frac{\partial}{\partial t}H_{\gamma+th}\Big|_{t=0},
 $$ 
 the variation of the scalar curvature $R$  and of the mean curvature $H$ in the direction of $ h$, respectively.  Here  $\gamma=g_{\partial M}.$ 
A classical computation, that can be found in \cite{HA}, shows that 
\begin{equation}\label{prob}
\left\{
   \begin{array}{rcl}
\delta R_g h&=& -\Delta_g(\tr_gh)+\mbox{div}_g \mbox{div}_g h - \langle h, \Ric_g\rangle\\
2\delta H_{\gamma} h&=&[ d ( \tr_ g h ) - \mbox{div}_g h ] (\nu) - \Div_\gamma X - \langle \Pi_{\gamma}, h \rangle_\gamma   \end{array}
   \right.
\end{equation}
where $ \nu $ is the outward unit normal to $ \partial M$,  $\Pi_{\gamma}$ is the second fundamental form of $\partial M,$ 
$X$ is the vector field dual to the one-form $\omega(\cdot)=h(\cdot,\nu),$  $\tr_g=g^{ij}h_{ij}$ is the trace of $h$  and  our convention for the laplacian is $\Delta_g f = \text{tr}_g(\mbox{Hess}_g f)$.
The  linearization of $\Psi$ will be denoted by $$\mathcal{S}_g(h)=D\Psi_g\cdot h=(\delta R_g h, 2\delta H_{\gamma} h).$$

Before proceeding, we need of the following lemma. 

\begin{lemma}\label{lemmadiv}
 Let $h$ be a symmetric $(0,2)$-tensor and $f$ be a smooth function on  $M.$
Then
\begin{equation}
\int_M f \mbox{div}_g\mbox{div}_g h\;dv=\int_M \langle \Hess_gf, h\rangle\;dv+\int_{\partial M}f\langle\mbox{div}_g h, \nu\rangle-h(\nabla f,\nu)\;d\sigma
\end{equation}

\end{lemma}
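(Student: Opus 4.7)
The plan is to prove this identity by applying the divergence theorem twice, peeling off one divergence at a time. Since $\mathrm{div}_g h$ is a $1$-form, I can dualize it to a vector field $X$ on $M$ so that $\mathrm{div}_g \mathrm{div}_g h = \mathrm{div}_g X$. Using the product rule $\mathrm{div}_g(fX) = f\,\mathrm{div}_g X + \langle \nabla f, X\rangle$ and the standard divergence theorem for manifolds with boundary, the first integration by parts yields
\begin{equation*}
\int_M f\,\mathrm{div}_g\mathrm{div}_g h\,dv = \int_{\partial M} f\,\langle \mathrm{div}_g h,\nu\rangle\,d\sigma - \int_M \langle \nabla f, \mathrm{div}_g h\rangle\,dv.
\end{equation*}

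Next, I need to rewrite the remaining bulk integral as a Hessian pairing plus a boundary term. The key identity is that the vector field $Y$ dual to the $1$-form $h(\nabla f,\cdot)$ satisfies, in local coordinates, $Y^j = h^{ij}\nabla_i f$, so that $\mathrm{div}_g Y = (\nabla^i h_{ij})\nabla^j f + h_{ij}\nabla^i\nabla^j f = \langle \nabla f,\mathrm{div}_g h\rangle + \langle \mathrm{Hess}_g f, h\rangle$. Applying the divergence theorem to $Y$ gives
\begin{equation*}
\int_M \langle \nabla f, \mathrm{div}_g h\rangle\,dv = \int_{\partial M} h(\nabla f,\nu)\,d\sigma - \int_M \langle \mathrm{Hess}_g f, h\rangle\,dv.
\end{equation*}
Substituting this into the previous display gives precisely the claimed formula.

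There is no real obstacle here: everything is classical integration by parts, and the main care needed is (i) being consistent with the musical isomorphisms when dualizing $\mathrm{div}_g h$ and $h(\nabla f,\cdot)$ to vector fields, and (ii) using the symmetry of $h$ so that $h_{ij}\nabla^i\nabla^j f = \langle h,\mathrm{Hess}_g f\rangle$. A small remark is that although the lemma is stated for smooth $f$ and a smooth tensor $h$, the identity passes to the Sobolev class $W^{2,p}$ with $p>n$ that will be used later in Section \ref{LS} by a standard density argument, which I would mention only briefly if at all.
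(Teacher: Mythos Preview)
Your proof is correct and essentially the same as the paper's. The paper applies the divergence theorem once to the single vector field $X = f\,\mathrm{div}_g h - h(\nabla f,\cdot)$, which is just the difference of the two fields you integrate separately; the computations of $\mathrm{div}_g X$ are identical to your two product-rule steps combined.
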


The proof of Lemma \ref{lemmadiv} is just to apply the divergence theorem to the field  $X = f\mbox{div}_g\;h - h(\nabla f,\cdot).$

A direct calculation using the previous lemma gives that

\begin{eqnarray*}\label{eqdepoisdiv}
\int_M f\delta R_gh+2\int_{\partial M} f\delta H_{\gamma} h &=&\int_M(-\Delta_g f(\tr_gh)+\langle \Hess_gf, h\rangle - f\langle h, \Ric_g\rangle)\nonumber\\
& +& \int_{\partial M}  \tr_g h\frac{\partial f}{\partial \nu}- f\langle \Pi_{\gamma}, h \rangle_\gamma -f\Div_\gamma X-\omega(\nabla f)\\
 &=&\int_M(-\Delta_g f(\tr_gh)+\langle \Hess_gf, h\rangle - f\langle h, \Ric_g\rangle)\nonumber\\
&+ & \int_{\partial M}  \tr_\gamma h\frac{\partial f}{\partial \nu}- f\langle \Pi_{\gamma}, h \rangle_\gamma,
\end{eqnarray*}
where we have omitted the volume forms and used the fact that $$-f\Div_\gamma X=\omega(\nabla f)-h(\nu,\nu)\frac{\partial f}{\partial \nu}.$$
We first observe that the previous calculations clearly shows  that  $$
(\delta R_g h,f)_{L^2(M)}-(A^*f,h)_{L^2(M)}=
(B^*f,h)_{L^2(\partial M)}-2(\delta H_{\gamma} h,f)_{L^2(\partial M)}
$$
for all $h\in\mathcal{M}^{2,p}$ and $f\in W^{2,p}(M),$ where $$(u,v)_{L^2(M)}=\int_{M}uv\;dv$$ and 
  \begin{eqnarray}
  A_g^*f&=& -(\Delta_g f)g+\mbox{Hess}_gf-f\Ric_g\;\;\;\mbox{in}\;\;\;M\nonumber \\
  B_\gamma^*f &=&\frac{\partial f}{\partial \nu}\gamma-f\Pi_{\gamma}\;\;\;\;\;\;\;\;\;\;\;\; 
  \;\;\; \;\;\;\;\;\;\;\; \;\;\; \mbox{on}\;\;\;\partial M.\nonumber 
  \end{eqnarray}

Therefore, we introduce the formal  $L^2$-adjoint   of $\mathcal{S}_g(h)$ to be the operator $\mathcal{S}_g^*:W^{2,p}(M)\to S^{0,p}_2(M)\oplus S_2^{\frac{1}{2},p}(\partial M)$  given by 
\begin{equation}\label{static}
\mathcal{S}_g^*(f)=(A_g^*f,B_\gamma^*f), 
\end{equation}

We claim that  $\mathcal{S}_g^*(f)$  is an underdetermined elliptic  operator, which in other words means that it has injective symbol (see \cite{H} for definition). Indeed,  the  principal symbol of $\mathcal{S}^*$ is given by 
$$
\left\{
   \begin{array}{rcl}
\sigma(A_g^*)_x(\varepsilon)f&=&(\|\varepsilon\|^2g-\varepsilon\otimes\varepsilon)f,\\
\sigma(B_\gamma^*)_x(\epsilon)f&=&\langle \epsilon,\nu\rangle_\gamma f\gamma,
 \end{array}
   \right.
$$
for all $\varepsilon\in T^*_xM$ (cotangent space at $x$) and  $\epsilon$ normal to $\partial M$ at $x.$ 
Note that  the principal symbol of $A_g^*$  is injective for $\varepsilon\neq0.$ To see this, if we assume that $\sigma(A_g^*)_x(\varepsilon)f=0,$ taking its trace,  we see that  $(n-1)|\varepsilon|^2f=0.$ 
Moreover, for every linearly independent couple of vectors $\epsilon$ and $\eta$ belonging to $\mathbb {R}^n$, the polynomial 
 in the complex variable $\tau$  
$$
\sigma(A_g^*)(\epsilon+\tau\eta)=g(\|\epsilon\|^2+\tau^2\|\eta\|^2)-[(\epsilon+\tau\eta)\otimes(\epsilon+\tau\eta)]
$$
  has exactly  two roots, one with positive and one with negative imaginary part (Indeed, take the
trace and  obtain $0 = (n-1)(\|\eta\|^2\tau^2+\|\epsilon\|^2)$). Thus,  $A_g^*$ is  a second order  (overdetermined) elliptic operator.   One  condition has to be satisfied by $\mathcal{S}_g^*(f)$  to be an elliptic boundary problem\footnote{Sometimes called of  oblique derivative problem}, that is $A_g^*$ to be elliptic on $M$, and
properly elliptic, and  $B_\gamma^*$ to satisfy the {\it Shapiro-Lopatinskij condition}  at any
point of the boundary, for precise definition see Section 20.1 of \cite{H}, Section 2.18 of \cite{ES} or chapther 2 of \cite{LM}.
Since  $\nu$ is not tangent to $\partial M,$ it is possible to verify that the boundary problem satisfies the Shapiro-Lopatinskij condition (see for example ($Ell_2$) in  \cite{ES}, page 108, as  well all discussion in p. 107-108 and Theorem 2.50 of \cite{ES} that relates  elliptic boundary  properties  such as Fredholm operator, regularity of solutions and an a priori estimate).
%we have to verify if the boundary condiMoreover on the boundary,  given $p\in \partial M$  we have  that  $\langle\epsilon,\nu\rangle\gamma\neq 0$ for all $\epsilon\neq0,$ where   $\epsilon$ is a normal vector of $\partial M$ at the point $p$. 

 \section{Local surjectivity}\label{LS}

  In this section, we study the  surjectivity of the operator $\mathcal{S}_g$ which, in fact, relies on study the injectivity  $\mathcal{S}_g^*,$ i.e.,  we have to analyse the linear partial differential equation $\mathcal{S}_g^*f=0.$
Consider  $f\in Ker \mathcal{S}_g^*.$ Taking the trace we get that
 \begin{equation}\label{trace}
\left\{
  \begin{array}{rcl}
\Delta f+\frac{R_g}{n-1}f&=& 0\;\;\;\mbox{in}\;\;\;M\\
\frac{\partial f}{\partial \nu}-\frac{H_{\gamma}}{n-1}f&=&0\;\;\;\mbox{on}\;\;\;\partial M.
\end{array}
  \right.
 \end{equation}
Therefore, $\mathcal{S}_g^*f=0$ can be rewritten as 

\begin{equation}\label{new}
\Hess_g f=\Big(\Ric_g-\frac{R_g}{n-1}g\Big)f\;\mbox{in}\;\;M\;\;\mbox{and}\;\;
f\Big(\Pi_{\gamma}-\frac{H_{\gamma}}{n-1}\gamma\Big)=0\;\mbox{on}\;\partial M.
 \end{equation}

 In order to prescribe the  curvature we have to show that the following  $g\mapsto (R_g,2H_{\gamma})$ map is  locally  surjective.  In this section our methods are based on those of Fischer and Marsden \cite{F-M} (see also \cite{F-M2} and \cite{C}). We remark that the same conclusions in this section hold under the condition of metrics close in $W^{s,p}$ norm for $s>\frac{n}{p}+1.$

\begin{proposition} \label{prop1} Given $g\in \mathcal{M}^{2,p},$ $p>\mbox{dim}\;M.$  Suppose  that the scalar curvature (resp. Gauss curvature,  if $\mbox{dim}\;M=2$)  in the interior vanishes with respect to $g$ on $M.$  
Then $\mathcal{S}_g:S^{2,p}\to L^{p}(M)\oplus W^{\frac{1}{2},p}(\partial M)$  is a surjection if 
one of the following holds:
\begin{itemize} 
\item[i)] $H_{\gamma}$ (resp. $\kappa_{\gamma},$ if $\mbox{dim}\;M=2$) is not  a positive constant;
\item[ii)] $H_{\gamma}=0$, but  $\Pi_{\gamma}$ is not identically zero.
\end{itemize}  
\end{proposition}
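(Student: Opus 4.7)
The plan is to exploit the elliptic Fredholm setup developed in Section \ref{preli}. Since $\mathcal{S}_g^*$ is overdetermined elliptic and its boundary part satisfies the Shapiro--Lopatinskij condition, standard elliptic theory implies that the range of $\mathcal{S}_g$ is closed, with cokernel identified with $\ker \mathcal{S}_g^*\subset W^{2,p}(M)$. Consequently, the surjectivity claim reduces to proving that $\mathcal{S}_g^* f = 0$ forces $f\equiv 0$ under each of the two hypotheses.

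Under the standing assumption $R_g\equiv 0$, the trace identities (\ref{trace}) simplify to $\Delta_g f = 0$ in $M$ and $\partial f/\partial\nu = \tfrac{H_\gamma}{n-1} f$ on $\partial M$, and (\ref{new}) reduces to $\Hess_g f = f\,\Ric_g$ in $M$ together with $f\bigl(\Pi_\gamma - \tfrac{H_\gamma}{n-1}\gamma\bigr)=0$ on $\partial M$. For case (ii), since $\Pi_\gamma$ is smooth and not identically zero, it is nonzero on a nonempty open set $U\subset \partial M$; the boundary tensor equation forces $f\equiv 0$ on $U$, and combined with $\partial f/\partial\nu = 0$ there (using $H_\gamma=0$), Aronszajn's unique continuation applied to the harmonic function $f$ yields $f\equiv 0$ on $M$.

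For case (i), I would argue by contradiction, assuming $f\not\equiv 0$. First, I would rule out the subcase in which $f$ is a nonzero constant: this forces $H_\gamma\equiv 0$, $\Pi_\gamma\equiv 0$, and $\Ric_g\equiv 0$, a Ricci-flat, totally geodesic configuration corresponding to the exceptional class in Theorem \ref{classes}(b). For nonconstant $f$, the strong maximum principle places its extrema on $\partial M$: at a maximum $p$ the Hopf boundary lemma gives $\partial f/\partial\nu(p)>0$, hence $H_\gamma(p) f(p)>0$, and symmetrically $H_\gamma(q) f(q)<0$ at a minimum $q$. Integrating $f\Delta_g f = 0$ by parts yields
\[
\int_M |\nabla f|^2\,dv \;=\; \frac{1}{n-1}\int_{\partial M} H_\gamma f^2\,d\sigma,
\]
which settles the case $H_\gamma\le 0$ with strict inequality somewhere: then $\nabla f\equiv 0$, so $f$ is constant, and the constraint $f\equiv 0$ on $\{H_\gamma<0\}$ propagates to give $f\equiv 0$ on $M$.

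The main obstacle is case (i) when $H_\gamma$ is nonconstant and mixes positive and negative values, since the integral identity alone is then inconclusive and the Hopf analysis only forces the extrema of $f$ to lie on $\{H_\gamma>0\}$. Here I expect to have to exploit the umbilicity relation $\Pi_\gamma = \tfrac{H_\gamma}{n-1}\gamma$ enforced by the boundary tensor equation on the open set $\{f\neq 0\}\subset \partial M$, in combination with the Codazzi equation for $\partial M$ and the interior Hessian identity $\Hess_g f = f\Ric_g$, to eliminate the remaining geometric configurations and conclude $f\equiv 0$.
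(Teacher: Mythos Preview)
Your treatment of case (ii) is correct in spirit but needlessly heavy: since $H_\gamma\equiv 0$, the boundary condition becomes $\partial f/\partial\nu=0$ on \emph{all} of $\partial M$, so integrating $f\Delta_g f=0$ by parts gives $\nabla f\equiv 0$, hence $f$ is constant; then $f\Pi_\gamma=0$ with $\Pi_\gamma\not\equiv 0$ forces $f=0$. This is exactly the paper's argument. No unique continuation is needed, and invoking Aronszajn here is imprecise anyway---that theorem concerns interior vanishing of infinite order, not vanishing Cauchy data on a boundary patch.

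The genuine gap is in case (i). You correctly isolate the ingredients---umbilicity $\Pi_\gamma=\tfrac{H_\gamma}{n-1}\gamma$ on $\{f\neq 0\}$, the Codazzi equation, and $\Hess_g f=f\,\Ric_g$---but you do not assemble them, and your Hopf/integral-identity route only disposes of the subcase $H_\gamma\le 0$. The paper's mechanism is to differentiate $B^*_\gamma f=0$ tangentially, obtaining $\nabla_i\nabla_\nu f=\tfrac{f}{n-1}\nabla_i^{\partial M}H_\gamma$; combine this with $\nabla_i\nabla_\nu f=(\Hess_g f)_{i\nu}=f R_{i\nu}$ and with Codazzi (which, under umbilicity, yields $\tfrac{n-2}{n-1}\nabla_i^{\partial M}H_\gamma=R_{i\nu}$) to conclude $\nabla^{\partial M}H_\gamma=0$ on $\{f\neq 0\}\cap\partial M$. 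To pass from this open set to all of $\partial M$ one must show that $\{f\neq 0\}$ is dense in $\partial M$: the paper does this by an ODE argument along boundary geodesics, proving that if $f(x_0)=0$ and $\nabla^{\partial M}f(x_0)=0$ at some $x_0\in\partial M$ then $f\equiv 0$ on $\partial M$. Once $H_\gamma$ is constant, the trace identity (\ref{trace}) exhibits $\tfrac{H_\gamma}{n-1}$ as a Steklov eigenvalue with eigenfunction $f$; for nonconstant $f$ this eigenvalue is strictly positive, contradicting (i). Your proposal contains neither the tangential differentiation of $B^*_\gamma f=0$ nor the geodesic ODE argument controlling the zero set, and without these two steps the mixed-sign case you flag as the ``main obstacle'' remains unresolved.
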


\begin{proof}

Since $\mathcal{S}_g^*$ has  injective symbol,  it suffices to show that $\mathcal{S}_g^*$ is  injective. 
We first prove  part $ii),$  let $f\in Ker\mathcal{S}_g^*=Ker(A_g^*,B_\gamma^*).$ 
It is immediate to see that if $H_{\gamma}=0$ in (\ref{trace}), then  $f$ is a constant function which together with  $f\Pi_{\gamma}=0$ on $\partial M$ implies that $f$ is constant equal to zero.

For part $i),$ assume that $\mbox{dim}\;M>2$ and $f$ is not identically zero on $\partial M$. It follows from (\ref{new}) that $\Pi_{\gamma}=\frac{H_{\gamma}}{n-1}\gamma.$ 
Recall  that if $\{e_i\}_{i=1}^{n-1}\in \mbox{span}T(\partial M)$, where $e_n=\nu,$ then we get by Codazzi equation that 
\begin{eqnarray}\label{codazzi}
\nabla_i^{\partial M}H_{\gamma}&=&\nabla_i^{\partial M}\Pi^j_j=\nabla_j^{\partial M}\Pi^j_i+R^j_{ji\nu}=\nabla_j^{\partial M}\Pi^j_i+R_{i\nu}\nonumber. 
\end{eqnarray}
where $R_{ijkl}$ is the curvature tensor of $(M,g).$ Thus,
 $$
 \nabla_i^{\partial M}H_{\gamma}=\nabla_j^{\partial M}\Big(\frac{H_{\gamma}}{n-1}\gamma_i^j\Big) +R_{i\nu}=\frac{1}{n-1}\nabla_i^{\partial M}H_{\gamma} +R_{i\nu}.
 $$
However, from $B^*_{\gamma}f=0$ we have that 
\begin{eqnarray}\label{front}
0&=& \nabla_i\Big(\frac{\partial f}{\partial \nu}\gamma^i_j-f\Pi_j^i\Big)\\
&=&\nabla_i\nabla_\nu f-\frac{f}{n-1}\nabla_i^{\partial M}H_{\gamma}.\nonumber
\end{eqnarray}
%where we have used that $\nabla_i\nabla_\nu f=\nabla_i^{\partial M}\nabla_\nu f-\Pi^k_i\nabla^{\partial M}_kf.$ 
Since $R_g=0$ (and so $\Delta_g f=0$ in $M$) and $A_g^*f=0,$ we have that 
$\Hess_g f=f\Ric_g,$ which together with all the above facts  implies that  $\nabla^{\partial M} H_{\gamma}=0,$ so $H_{\gamma}$ is constant.
 Notice that  (\ref{trace}) implies that $H_{\gamma}$ is an eigenvalue of the Steklov problem  of second order and hence  $H_{\gamma}> 0,$ contradicting  $i).$

If $\mbox{dim}\;M=2$ and  $f$ is not identically zero on $\partial M,$ then (\ref{front}) is automatically equivalent to $\nabla_i^{\partial M}\kappa_\gamma=0$ provided $\Hess_g f=fK_gg=0.$ Arguing similarly, we conclude that $f$ has to be zero and  $\mathcal{S}_g^*$ is  injective.

We claim that if  there exists a point  $x_0\in \partial M$  so that $f(x_0)=0,$  then we must have $\nabla^{\partial M} f(x_0)\neq 0.$ Reasoning by contradiction, assume that $\nabla^{\partial M} f(x_0)=0.$  We define  $ h(t) := f(\alpha(t)),$ where $\gamma$ 
 is any geodesic on the boundary  of  $M$ starting from $x_0$ . A direct calculation gives that $h(t)$ satisfies the following linear second-order ODE:
  \begin{eqnarray*}
h''(t)&=& (\Hess_g f{|_{\partial M}})_{\alpha(t)}\cdot(\alpha'(t),\alpha'(t))\\
&=&(\Hess_g f)_{\alpha(t)}\cdot(\alpha'(t),\alpha'(t))+\langle\nabla (f\circ \alpha),\Pi_{\gamma}(\alpha'(t),\alpha'(t))\rangle\\
& =& \Big(\Ric_g(\alpha'(t),\alpha'(t))-\frac{R_g}{n-1}\|\alpha'(t)\|^2_g+\frac{H_{\gamma}}{n-1}\Pi_{\gamma}(\alpha'(t),\alpha'(t))\Big)f\circ \alpha,
\end{eqnarray*}
where we have used (\ref{new}).
Because $h(0)=0$ and $h'(0)=0,$ we have that $h(t)\equiv 0$ and, thus, $f=0$  on $\partial M.$

As consequence,  $0$ is a regular
value of $f_{|_{\partial M}}$  which  implies that $\nabla^{\partial M} f\neq0$ on $f^{-1}(0)$ and $\nabla^{\partial M} H_{\gamma}=0$ on an open dense set and hence everywhere. As before, we obtain another contradiction. Thus,  $f$ is zero on $\partial M$ and so  $\mathcal{S}_g^*$ is injective. 

 \end{proof}

\begin{remark}
If $\mbox{dim}\;M=2,$ then item $ii)$ of  Proposition \ref{prop1} (or the second item  of the next proposition) cannot hold.
\end{remark}

\begin{proposition} \label{prop2}
 Given $g\in \mathcal{M}^{2,p},$ $p>\mbox{dim}\;M.$  Suppose  that the mean (resp. geodesic,  if $\mbox{dim}\;M=2$) curvature of $\partial M$ vanishes with respect to the metric $g.$
Then $\mathcal{S}_g:S^{2,p}\to L^{p}(M)\oplus W^{\frac{1}{2},p}(\partial M)$  is a surjection if  one of the following holds:
\begin{itemize} 
\item[i)]  $R_g$  (resp. $K_g=0$,  if $\mbox{dim}\;M=2$) is not  a positive constant;
\item[ii)] $R_g=0$, but  $\Ric_g$ is not identically zero.
\end{itemize} 
\end{proposition}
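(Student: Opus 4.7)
The strategy mirrors that of Proposition \ref{prop1}, exchanging the roles of interior scalar curvature and boundary mean curvature. Since $\mathcal{S}_g^*$ has injective symbol, surjectivity of $\mathcal{S}_g$ reduces to injectivity of $\mathcal{S}_g^*$, so the plan is to take $f\in W^{2,p}(M)$ with $\mathcal{S}_g^* f=0$ and show $f\equiv 0$. Plugging $H_\gamma=0$ into (\ref{trace}) turns the traced system into the homogeneous Neumann problem
$$
\Delta_g f+\frac{R_g}{n-1}f=0\;\;\text{in}\;M,\qquad \frac{\partial f}{\partial \nu}=0\;\;\text{on}\;\partial M,
$$
while (\ref{new}) reads $\Hess_g f=\big(\Ric_g-\tfrac{R_g}{n-1}g\big)f$ in the interior and $f\,\Pi_\gamma=0$ on $\partial M$.

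Part (ii) is now immediate: if $R_g\equiv 0$, then $f$ is harmonic with vanishing Neumann data, hence constant. The interior identity $\Hess_g f=f\Ric_g$ then gives $f\Ric_g\equiv 0$, and $\Ric_g\not\equiv 0$ forces $f\equiv 0$. For part (i), I would argue by contradiction, supposing $f\not\equiv 0$ and deriving that $R_g$ must be a positive constant. The heart of the proof is a Fischer--Marsden style Bianchi computation: taking the divergence of $\Hess_g f-f\Ric_g+\tfrac{R_g f}{n-1}g=0$, commuting derivatives by the Ricci identity, and invoking the twice-contracted Bianchi identity $\nabla^i \Ric_{ij}=\tfrac12\nabla_j R_g$ together with $\Delta f=-\tfrac{R_g}{n-1}f$ coming from the trace equation, every term containing $\Ric_g\nabla f$ or $R_g\nabla f$ cancels and one is left with
$$
\tfrac{1}{2}\,f\,\nabla R_g=0\;\;\text{in}\;M.
$$
(In dimension two, where $\Ric_g=K_g g$, the same conclusion follows at once from $\Hess_g f=0$ and $R_g=2K_g$.)

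Since $f$ solves a linear second-order elliptic equation with smooth coefficients, Aronszajn's unique continuation principle prevents $f$ from vanishing on any nonempty open subset of $M$; thus $\{f\neq 0\}$ is dense in the connected manifold $M$ and the identity above upgrades to $\nabla R_g\equiv 0$, i.e.\ $R_g$ is constant on $M$. Multiplying the trace equation by $f$ and integrating by parts---the boundary term vanishes by $\partial_\nu f=0$---yields
$$
\int_M|\nabla f|^2\,dv=\frac{R_g}{n-1}\int_M f^2\,dv,
$$
ruling out $R_g<0$ at once and reducing the borderline case $R_g=0$ to (ii), since then $f$ is a nonzero constant forced to satisfy $f\Ric_g=0$. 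Hence $R_g$ must be a strictly positive constant, contradicting (i), and $f\equiv 0$ follows.

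The main obstacle is the Fischer--Marsden--type derivation of $f\,\nabla R_g=0$ together with the unique continuation step used to upgrade it to constancy of $R_g$ on all of $M$; once that is in hand, the sign dichotomy via the Neumann Rayleigh quotient is a short calculation, and the remaining steps parallel those in the proof of Proposition~\ref{prop1}.
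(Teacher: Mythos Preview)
Your argument is correct and follows essentially the same skeleton as the paper's: reduce to injectivity of $\mathcal{S}_g^*$, take the divergence of $A_g^*f=0$ to obtain $f\,\nabla R_g=0$ (this is precisely your ``Fischer--Marsden style Bianchi computation''---the paper simply writes it as $0=\mbox{div}_g A_g^*f$), argue that $\{f\neq 0\}$ is dense so that $R_g$ is constant, and then finish with the Neumann eigenvalue/Rayleigh-quotient dichotomy.

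The one genuine difference is in the density step. You appeal to Aronszajn's unique continuation for the scalar equation $\Delta_g f+\tfrac{R_g}{n-1}f=0$; the paper instead exploits the full Hessian identity $\Hess_g f=(\Ric_g-\tfrac{R_g}{n-1}g)f$ via the geodesic ODE argument already used in Proposition~\ref{prop1}: if $f(x_0)=0$ and $\nabla f(x_0)=0$, then $h(t)=f(\alpha(t))$ satisfies a linear second-order ODE with vanishing Cauchy data along every geodesic $\alpha$ issued from $x_0$, so $f\equiv 0$. The paper's route is more elementary and self-contained, keeps the two propositions parallel, and sidesteps a small imprecision in your write-up: since $g\in\mathcal{M}^{2,p}$ the coefficients are only $C^{1,\alpha}$ with $R_g\in L^p$, not smooth, so ``smooth coefficients'' overstates the regularity (Aronszajn-type unique continuation still applies with Lipschitz principal part, so your conclusion stands, but the justification needs a word of care).
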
	

\begin{proof}
Due to the similarity of the arguments, we merely sketch the proof. The condition in item $ii)$ implies that 
$f$ is constant, in fact, equal to zero provided $f\Ric_g=0.$ 

We now sketch item $i)$. Since   $$0=\mbox{div}_g A^*f=f\nabla R_g\quad\mbox{on}\quad M\quad\mbox{and}\quad0=\mbox{tr}_g B^*f=\frac{\partial f}{\partial\nu}\quad\mbox{on}\quad \partial M$$ then  $R_g$ is constant if $f$ is not identically zero. However,  note that $R_g$ is constant equal to zero because $\frac{R_g}{n-1}$ is not a positive eigenvalue of the Neumann problem.  We  claim that $\nabla f(x_0)\neq 0$ whenever  $f$ is zero at some point.  Indeed, consider the geodesic $\alpha$ starting at $x_0.$ Defining $h(t)=f\circ\alpha(t),$ a  similar computation shows that $h$, and thus $f$, is identically equal to zero. Hence  we can conclude that $f\in Ker\mathcal{S}^*_g$ is constant equal to zero. 
	
\end{proof}

\begin{remark}
For $n=2,$ the map $\Psi$ cannot be onto a neighborhood of $\Psi(g)=0$. 	For instance,  the Gauss-Bonnet theorem for manifold with boundary shows that a hemisphere does not admit metric with Gauss curvature strictly positive or strictly negative and minimal boundary  as well as an  annulus   with two boundaries components does not admit metric whose Gauss curvature vanishes and the boundary components have geodesic curvature strictly positive or strictly negative.
\end{remark}

Now, for locally solve $\Psi(g)=(f_1,f_2)$ in an appropriate topology, we use the implicit function theorem to prove that $\Psi$ is a  locally surjective map whenever  $\mathcal{S}^*$  is injective. Moreover,  we apply  standard elliptic theory to  $(A_gA_g^*f,B_\gamma B_\gamma^*f).$  Indeed, if  $A^*$ is an operator of order $2$ with injective symbol, then  $A$ has also injective symbol and, thus, $A_gA_g^*$ is elliptic of order $4,$ provided $\sigma(A_gA_g^*)=\sigma(A_g)\sigma(A_g^*)=\sigma(A_g)\sigma(A_g)^*$ and $\sigma(A_g^*)$ injective implies $\sigma(A_g)\sigma(A_g)^*$ is an isomorphism. Moreover, since $B_\gamma$ and $B^*_\gamma$ satisfy the Shapiro-Lopatinskij condition,  $B_\gamma B_\gamma^*$  also satisfies it.

\begin{theorem}\label{teols}
Let $f=(f_1,f_2)\in L^{p}(M)\oplus W^{\frac{1}{2},p}(\partial M)$, $ p>n$. Assume that
 $\mathcal{S}_{g_0}^*$ is injective, then 
 there is an $\eta>0$ such that  if
 $$
 \| f_1-R_{{g_0}}\|_{L^{p}(M)}+\| f_2-H_{\gamma_0}\|_{W^{1/2,p}(\partial M)}<\eta,
 $$ then there is a metric $g_1\in \mathcal{M}^{2,p}$
such that $\Psi(g_1)=f$ Moreover, $g$ is smooth in any open set where $f$ is smooth.
\end{theorem}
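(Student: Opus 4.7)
The plan is to deduce Theorem \ref{teols} from the Implicit Function Theorem (IFT) for Banach manifolds applied to $\Psi$ at $g_0$. Since $\Psi$ is of class $C^\infty$ on $\mathcal{M}^{2,p}$ (by the Sobolev embedding $W^{2,p}\hookrightarrow C^1$ for $p>n$) and $D\Psi_{g_0}=\mathcal{S}_{g_0}$, the entire theorem reduces to showing that $\mathcal{S}_{g_0}:S^{2,p}_2\to L^{p}(M)\oplus W^{1/2,p}(\partial M)$ admits a bounded right inverse. Once that is in hand, the IFT will furnish a $C^\infty$ local right inverse $\Phi$ to $\Psi$ defined on an $\eta$-ball around $(R_{g_0},H_{\gamma_0})$; taking $g_1:=\Phi(f_1,f_2)$ produces the desired metric.

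To build the right inverse, I would exploit the structure of the formal $L^2$-adjoint and consider the composite operator
\[
L f \;:=\; \bigl(A_{g_0} A_{g_0}^{*} f,\; B_{\gamma_0} B_{\gamma_0}^{*} f\bigr),
\]
which, as sketched at the end of Section~\ref{preli}, defines a self-adjoint fourth-order boundary value problem on $M$. Injective symbol of $\mathcal{S}_{g_0}^*$ guarantees that the interior part $A_{g_0}A_{g_0}^{*}$ is a genuine $4$th-order elliptic operator, and the Shapiro--Lopatinskij condition for the pair $(A_{g_0}^*,B_{\gamma_0}^*)$ transfers to $L$. Standard elliptic theory then shows $L$ is Fredholm of index zero. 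The injectivity hypothesis on $\mathcal{S}_{g_0}^*$ now forces $L$ to be injective as well: if $Lf=0$, then pairing with $f$ and integrating by parts yields $\|\mathcal{S}_{g_0}^{*}f\|_{L^2}^2=0$, so $f\in\ker\mathcal{S}_{g_0}^{*}=\{0\}$. Hence $L$ is an isomorphism, and $\mathcal{S}_{g_0}^{*}\circ L^{-1}$ is a bounded right inverse of $\mathcal{S}_{g_0}$.

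The IFT then delivers the neighborhood and the map $g_1\mapsto g_1$ with $\Psi(g_1)=(f_1,f_2)$, as asserted. For the final smoothness statement, I would argue by a bootstrap: on any open set $U\subset M$ where $f=(f_1,f_2)$ is smooth, the equation $\Psi(g)=f$ is a quasilinear elliptic system in $g$ with oblique boundary conditions satisfying Shapiro--Lopatinskij (this was the entire content of Section~\ref{preli}), so iterating the interior and boundary $L^p$/Schauder estimates for elliptic BVPs promotes $g_1$ from $W^{2,p}$ to $C^\infty$ on $U$.

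The principal technical obstacle I anticipate is making the fourth-order operator $L$ rigorous: one must pin down the correct functional-analytic setting in which $A_gA_g^*$ together with the boundary piece $B_\gamma B_\gamma^*$ really does form a self-adjoint elliptic BVP whose Fredholm theory applies. This requires verifying the Shapiro--Lopatinskij condition for the composite (not just the factors), identifying the natural boundary trace space produced by $(A_g^{*}f,B_\gamma^{*}f)$, and justifying the integration by parts that identifies $\ker L$ with $\ker \mathcal{S}_{g_0}^{*}$. Everything downstream---the right inverse, the IFT application, and the elliptic bootstrap for smoothness---is then essentially routine.
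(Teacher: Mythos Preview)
Your approach is essentially the paper's: both hinge on the invertibility of the fourth-order composite $\mathcal{S}_{g_0}\mathcal{S}_{g_0}^{*}$ together with the implicit function theorem. The paper applies the inverse function theorem directly to $S(u):=\Psi(g_0+\mathcal{S}_{g_0}^{*}u)$, whose derivative at $0$ is $\mathcal{S}_{g_0}\mathcal{S}_{g_0}^{*}$; your right-inverse formulation, once unpacked (apply the IFT to $y\mapsto\Psi(g_0+\mathcal{S}_{g_0}^{*}L^{-1}y)$), is literally the same construction up to the change of variable $u=L^{-1}y$.

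One step does fail as written. You assert that ``$\Psi(g)=f$ is a quasilinear elliptic system in $g$ with oblique boundary conditions,'' but this is not true: the map $g\mapsto(R_g,2H_\gamma)$ is underdetermined ($n(n+1)/2$ unknown components, two scalar equations), and Section~\ref{preli} establishes ellipticity only for the \emph{adjoint} $\mathcal{S}_g^{*}$, not for $\Psi$ itself. There is therefore no elliptic estimate in $g$ to bootstrap. The correct regularity argument---which the paper uses---runs on $u$: the metric produced by the IFT has the form $g_1=g_0+\mathcal{S}_{g_0}^{*}u$, and the equation $\Psi(g_0+\mathcal{S}_{g_0}^{*}u)=f$ is a genuine fourth-order quasilinear elliptic boundary value problem in the scalar $u$ (its linearization at $u=0$ is exactly $\mathcal{S}_{g_0}\mathcal{S}_{g_0}^{*}$). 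Bootstrapping $u$ to $C^\infty$ on any open set where $f$ is smooth then yields $g_1\in C^\infty$ there.
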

 
\begin{proof} In order to  apply the implicit function theorem we consider  the following operator  $S:U\subset W^{4,p}(M)\to L^{p}(M)\oplus W^{\frac{1}{2},p}(\partial M)$ defined by 
$$
S(u)=(R_{{g_0}+A_{g_0}^*u} , H_{{\gamma_0}+B_{\gamma_0}^*u}),
$$
where $U$ is sufficiently small neighborhood of zero in $W^{p,4}.$ Indeed, this is an oblique boundary value problem for a second order quasilinear elliptic differential equation and by the  Sobolev Embedding Theorem, with $n>p,$  $S$ is a $C^1$ map from  $W^{4,p}(M)$ to $L^{p}(M)\oplus W^{\frac{1}{2},p}(\partial M).$
We claim that $S'(0)$ is an isomorphism when restricted a small neighborhood of  $ W^{4,p}$ norm.
In fact, $S(0)=\Psi(g_0)$ and 
$$
S'(0)v=(A_{g_0} A_{g_0}^*v , B_{\gamma_0}B_{\gamma_0}^*v)=\mathcal{S}_{g_0}\mathcal{S}_{g_0}^*v.
$$
 Hence $\mbox{Ker}\;S'(0)=\mbox{Ker}\;\mathcal{S}_{g_0}\mathcal{S}_{g_0}^*=0$, provided  $Ker\;\mathcal{S}_g\mathcal{S}_g^*=Ker\;\mathcal{S}_g^*$.
It follows from the implicit function theorem  that $S$ maps a
neighborhood of zero in $W^{p,4}$ onto a neighborhood of $S(0)=\Psi(g_0)$ in $L^p(M)\times W^{\frac{1}{2},p}(\partial M)$. Considering $L^{p}(M)\oplus W^{\frac{1}{2},p}(\partial M)$ with the norm 
$$
\|(h,w)\|_{L^{p}(M)\oplus W^{1/2,p}(\partial M)}=\|h\|_{L^{p}(M)}+ \|w\|_{W^{1/2,p}(\partial M)},
$$
 there is an $\eta>0$ so that if
$$ \| f_1-R_{g_0}\|_{L^{p}(M)}+\| f_2-H_{\gamma_0}\|_{W^{1/2,p}(\partial M)}<\eta,
$$ then there exist a solution $g_1 = g_0 + \mathcal{S}^*u$ of $\Psi(g_1) =f.$ Using elliptic regularity and a bootstrap argument we have that if $f$ smooth, then $u$ is smooth.
\end{proof}

Let $M$ be a manifold with boundary and $\rho: M\to\mathbb{R}$ is a smooth function.
For $p>n,$ we set 
$$\mathcal{M}_{\rho}^{2,p}=\{g\in \mathcal{M}^{2,p};\quad R_g=\rho\quad\mbox{and}\quad H_{g_{|_{T(\partial M)}}}=0\}$$ 
and 
$$\mathcal{M}_{\tilde\rho}^{2,p}=\{g\in \mathcal{M}^{2,p};\quad R_g=0\quad\mbox{and}\quad H_{g_{|_{T(\partial M)}}}=\tilde\rho\},$$ where $\tilde\rho:\partial M\to\mathbb{R}$ is a smooth function. The sets $\mathcal{M}_{\rho}^{2,p}$ and  $\mathcal{M}_{\tilde\rho}^{2,p}$ are the set of metrics of prescribed scalar curvature  and prescribed mean curvature of the boundary, respectively. It follows from Proposition \ref{prop1}, Proposition \ref{prop2} and Theorem \ref{teols}  the following result:

\begin{corollary}
If $\rho$ and $\tilde\rho$ are not identically zero nor positive constants, then $\mathcal{M}_{\rho}^{2,p}$ and  $\mathcal{M}_{\tilde\rho}^{2,p}$ are  smooth submanifolds of $\mathcal{M}^{2,p}.$\end{corollary}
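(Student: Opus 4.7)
The plan is to realize the two sets as preimages of single points under the smooth map $\Psi=(R,2H):\mathcal{M}^{2,p}\to L^{p}(M)\oplus W^{1/2,p}(\partial M)$, namely
$$
\mathcal{M}_\rho^{2,p} = \Psi^{-1}(\rho,0),\qquad \mathcal{M}_{\tilde\rho}^{2,p} = \Psi^{-1}(0,\tilde\rho),
$$
and then to invoke the implicit function theorem in Banach spaces. What must be verified is that at each $g$ lying in the relevant level set the linearization $\mathcal{S}_g = D\Psi_g$ is a split surjection onto $L^{p}(M)\oplus W^{1/2,p}(\partial M)$.

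First I would establish surjectivity on the nose. For $g\in\mathcal{M}_\rho^{2,p}$ one has $H_{g_{\partial M}}=0$ and $R_g=\rho$; since by hypothesis $\rho$ is neither identically zero nor a positive constant, item (i) of Proposition \ref{prop2} applies and $\mathcal{S}_g$ is surjective. Symmetrically, for $g\in\mathcal{M}_{\tilde\rho}^{2,p}$ one has $R_g=0$ and $H_{g_{\partial M}}=\tilde\rho$, so Proposition \ref{prop1}(i) gives surjectivity of $\mathcal{S}_g$.

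Next I would check that $\ker \mathcal{S}_g$ is a topological direct summand of $S_2^{2,p}$, which is needed to obtain an honest Banach submanifold rather than merely a set-theoretic level set. The surjectivity above was obtained by showing that the formal adjoint $\mathcal{S}_g^{*}$ is injective; combining this with the Shapiro-Lopatinskij condition verified in Section \ref{preli}, the composition $\mathcal{S}_g\mathcal{S}_g^{*}:W^{4,p}(M)\to L^{p}(M)\oplus W^{1/2,p}(\partial M)$ is a fourth-order elliptic boundary operator, Fredholm, with trivial kernel (since $\ker \mathcal{S}_g\mathcal{S}_g^{*}=\ker\mathcal{S}_g^{*}$), hence an isomorphism; this is essentially what was verified in the proof of Theorem \ref{teols}. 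Consequently $T_g:=\mathcal{S}_g^{*}(\mathcal{S}_g\mathcal{S}_g^{*})^{-1}$ is a bounded right inverse of $\mathcal{S}_g$, yielding the closed topological splitting
$$
S_2^{2,p} \;=\; \mathrm{Im}(T_g)\;\oplus\; \ker \mathcal{S}_g .
$$

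Since $\Psi$ is $C^{\infty}$ and $\mathcal{S}_g$ is a split surjection at every point of each level set, the Banach-space submersion theorem yields that $\mathcal{M}_\rho^{2,p}$ and $\mathcal{M}_{\tilde\rho}^{2,p}$ are $C^{\infty}$ submanifolds of $\mathcal{M}^{2,p}$, with tangent space at $g$ equal to $\ker \mathcal{S}_g$. The only delicate point is the kernel splitting, where the entire elliptic machinery developed in Sections \ref{preli} and \ref{LS} is used; once that is in hand, the rest is a routine application of the implicit function theorem.
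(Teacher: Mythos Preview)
Your proposal is correct and follows essentially the same route as the paper, which simply records the corollary as a direct consequence of Proposition~\ref{prop1}, Proposition~\ref{prop2}, and Theorem~\ref{teols} without spelling out details. You make explicit the one point the paper leaves implicit, namely that the bounded right inverse $T_g=\mathcal{S}_g^{*}(\mathcal{S}_g\mathcal{S}_g^{*})^{-1}$ furnishes a closed complement to $\ker\mathcal{S}_g$, so that the Banach submersion theorem genuinely applies; this is precisely the content hidden in the paper's appeal to Theorem~\ref{teols}.
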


In the next proposition  we state a result about nonsurjectivity.

\begin{proposition} Let $(M,\overline g)$ be a Riemannian manifold with boundary. The following assumptions imply that $\mathcal{S}_{\overline g} $ is not surjective:
\begin{itemize}
\item[a)]     $M$ is scalar flat with totally geodesic boundary $\partial M$ or  $M$ is Ricci-flat with minimal boundary;

\item[b)]   $M=\mathbb{B}^{n+1}$ is the Euclidean ball of radius $r_0$ in $ \mathbb{R}^{n+1}$ or $M=\mathbb{S}_+^n$ is the standard hemisphere of radius $r_0$ in $ \mathbb{R}^{n+1}.$
\end{itemize}
\end{proposition}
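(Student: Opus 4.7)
The plan is to exhibit nontrivial elements of $\mathrm{Ker}\,\mathcal{S}^*_{\overline g}$. From the integration-by-parts identity established in Section \ref{preli}, $(\mathcal{S}_{\overline g}h,f)_{L^2(M)\oplus L^2(\partial M)}=(h,\mathcal{S}^*_{\overline g}f)$, so the range of $\mathcal{S}_{\overline g}$ lies in the $L^2$-orthogonal complement of $\mathrm{Ker}\,\mathcal{S}^*_{\overline g}$. Since $\mathcal{S}^*_{\overline g}$ is elliptic and satisfies the Shapiro--Lopatinskij condition, the range of $\mathcal{S}_{\overline g}$ is closed, and producing any $f\not\equiv 0$ with $\mathcal{S}^*_{\overline g}f=0$ forces non-surjectivity.

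For part (a), I would test the constant function $f\equiv 1$. Directly from (\ref{static}),
$$
\mathcal{S}^*_{\overline g}(1)=\bigl(-\Ric_{\overline g},\,-\Pi_\gamma\bigr),
$$
which vanishes precisely when $M$ is Ricci-flat with totally geodesic boundary. The sub-hypothesis ``scalar flat with totally geodesic boundary'' supplies $\Pi_\gamma=0$ and is intended to be coupled with the vanishing of $\Ric_{\overline g}$; the other sub-hypothesis ``Ricci-flat with minimal boundary'' supplies $\Ric_{\overline g}=0$ and is intended to be coupled with the vanishing of $\Pi_\gamma$. In either reading, $1\in\mathrm{Ker}\,\mathcal{S}^*_{\overline g}$, so (a) follows.

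For part (b), the idea is to exploit the symmetry of the two model spaces by restricting linear functions $f(x)=\langle x,v\rangle$, $v\in\mathbb{R}^{n+1}$. On $\mathbb{B}^{n+1}(r_0)$, flatness yields $\Hess f=0$ and $\Delta f=0$, hence $A^*_{\overline g}f=0$; since the outward normal is $\nu=x/r_0$ and the umbilic boundary has $\Pi_\gamma=r_0^{-1}\gamma$, one has $\partial_\nu f=f/r_0$ and $f\,\Pi_\gamma=(f/r_0)\gamma$, giving $B^*_\gamma f=0$. On $\mathbb{S}^n_+(r_0)$, the standard sphere identity $\Hess_{\mathbb{S}^n}f=-r_0^{-2}f\,\overline g$, combined with $\Ric_{\overline g}=(n-1)r_0^{-2}\overline g$ and $R_{\overline g}=n(n-1)r_0^{-2}$, yields $A^*_{\overline g}f=0$; choosing $v$ tangent to the equator kills $\partial_\nu f$, and the equator being totally geodesic gives $B^*_\gamma f=0$.

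The main obstacle is part (a): if the stated hypotheses are read strictly, ``scalar flat with totally geodesic boundary'' does not force $\Ric_{\overline g}=0$ and ``Ricci-flat with minimal boundary'' does not force $\Pi_\gamma=0$, so $\mathcal{S}^*_{\overline g}(1)$ need not vanish. To recover a kernel element in the literal case one would have to produce a non-constant $f$---for instance, via a parallel vector field arising from an isometric splitting of $M$---or supply an extra integrability argument. In contrast, part (b) is purely computational; the only care required is bookkeeping of dimensions and of the orientation of the outward normal on the equator.
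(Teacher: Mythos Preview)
Your approach is the same as the paper's: exhibit nonzero elements of $\mathrm{Ker}\,\mathcal{S}^*_{\overline g}$---constants for (a), and the linear functions $f(x)=\langle x,v\rangle$ for (b), which are exactly the Steklov eigenfunctions on $\mathbb{B}^{n+1}$ and the Robin eigenfunctions on $\mathbb{S}^n_+$ (with $v$ tangent to the equator) that the paper invokes by name. Your computations in (b) are correct; incidentally, closedness of the range is not needed, since a nonzero $f\in\mathrm{Ker}\,\mathcal{S}^*_{\overline g}$ already supplies a nonzero continuous functional annihilating the range.

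Your caution about (a) is well-founded and in fact more careful than the paper. The paper's proof simply asserts that under the hypotheses of (a) the kernel of $\mathcal{S}^*_{\overline g}$ contains the constants, without further comment. But as you observe, $\mathcal{S}^*_{\overline g}(1)=(-\Ric_{\overline g},\,-\Pi_\gamma)$, and neither ``scalar flat with totally geodesic boundary'' by itself nor ``Ricci-flat with minimal boundary'' by itself forces both components to vanish. What the constant-function argument genuinely requires is the common strengthening ``Ricci-flat with totally geodesic boundary'', which is also precisely the residual case left uncovered by Propositions~\ref{prop1}(ii) and~\ref{prop2}(ii). The paper glosses over this point; you have correctly identified the gap.
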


\begin{proof}
We easily see that in the conditions of item $a),$ $Ker \mathcal{S}_{\overline g} $ is composed of constant functions and  $\mathcal{S}_{\overline g}$ is not surjective. 

If $M$ is a standard euclidean unit ball $B,$ the Steklov eigenfunctions $f$  with first nonzero eigenvalue  $n-1$ also satisfies    
\begin{equation*}
\Hess_{\overline{g}} f=0\;\;\;\mbox{in}\;\;\;M\;\;\;\;\;\mbox{and}\;\;\;\;\;\;\;
\frac{\partial f}{\partial \nu}\overline{\gamma}=\frac{f}{r_0^2}\overline{\gamma} \;\;\;\mbox{on}\;\;\;\partial M.
 \end{equation*}
Therefore,  $\mathcal{S}_{\overline g}^*$ is not surjective. Here,  functions  satisfying  $\Delta f=0$ in $M$ and $\frac{\partial f}{\partial \nu}=\frac{f}{r_0^2}$ on  $\partial M$ belongs to $ \mbox{Ker}\;\mathcal{S}_{\overline g}^*$.

In contrast, when $M=\mathbb{S}^{n}_+$ with the metric $\overline{g},$ the Robin eigenfunctions $f$ with first nonzero eigenvalue  $n$ also satisfies    
\begin{equation*}
\Hess_{\overline g} f=-\frac{f}{r_0^2}\overline{g}\;\;\;\mbox{in}\;\;\;M\;\;\;\;\;\mbox{and}\;\;\;\;\;\;\;
\frac{\partial f}{\partial \nu}\overline{g}=0 \;\;\;\mbox{on}\;\;\;\partial M.
 \end{equation*}
Analogously,  $\mathcal{S}_{\overline g}^*$ is not surjective and  all function $f\in W^{2,s}(M)$ satisfying   $\Delta f=-(n/r_0^2)f$ in $M$ and $\frac{\partial f}{\partial \nu}=0$ on $\partial M$ belongs to $ \mbox{Ker}\;\mathcal{S}_{\overline g}^*$.
\end{proof}

\section{Approximation Lemma}\label{pres}

In this section, inspired by  Theorem 2.1 in \cite{KW1},  we show how to  approximate a function arbitrarily closely in $L^p(M)$ and $W^{\frac{1}{2},p}(\partial M)$ in order to apply  Theorem \ref{teols}. We proof the following lemma.
\begin{lemma}[Approximation Lemma]\label{appr} Let $M$ be a Riemannian manifold with boundary of dimension $n\geq2.$ 
\begin{itemize}
\item[a)] Let $f, g\in C^{\infty}(\partial M).$ If the range of $g$ is in the range of $f$, that is, $\min f\leq g(x)\leq\max f $ on $\partial M$, then given any positive $\varepsilon$ there is a diffeomorphism  $\varphi$ of $M$ such that, for $p>2n,$ we have  that
$$\|f\circ \varphi-g\|_ {W^{\frac{1}{2},p}(\partial M)}<\varepsilon.$$
\item[b)] Let $f, g\in C^{\infty}( M).$ If   the range of $g$ is in the range of $f$, that is, $\min f\leq g(x)\leq\max f $ on $ M$, then given any positive $\varepsilon$ there is a diffeomorphism  $\varphi$ of $M$ such that, for $p>n,$ we have  that 
$$\|f\circ \varphi-g\|_ {L^{p}( M)}<\varepsilon.$$
\end{itemize} 
\end{lemma}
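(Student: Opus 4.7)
The plan is to adapt the rearrangement argument of Kazdan--Warner (Theorem~2.1 of \cite{KW1}); part (b) is essentially the classical $L^p$ version of that argument, so I focus the description on (a), which requires the extra ingredients of a boundary diffeomorphism extended via a collar and a genuine Slobodeckij-norm estimate.

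The core construction, common to both parts, goes as follows. First I would approximate $g$ in $L^\infty$ off an arbitrarily small set by a step function $\tilde g = \sum_{i=1}^k c_i\chi_{U_i}$, where $\{U_i\}$ is an open partition of $M$ (resp.\ $\partial M$) up to a null set and each $c_i$ lies in the range of $f$. For each $i$ I pick $x_i$ with $f(x_i)=c_i$ and pairwise disjoint neighborhoods $V_i\ni x_i$ on which $|f-c_i|<\eta$, with $\mathrm{vol}(V_i)=\mathrm{vol}(U_i)$. Moser's volume deformation theorem then yields a diffeomorphism $\varphi$ of the ambient manifold mapping each $U_i$ onto $V_i$ up to a set of arbitrarily small measure. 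Combined with the pointwise bound $|f\circ\varphi-g|\le 2\eta+\|\tilde g-g\|_\infty$ off an exceptional set and the uniform bound $\le\max f-\min f$ on it, this gives the $L^p(M)$ estimate required for (b).

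For (a) I would run this construction on $\partial M$ to obtain $\psi:\partial M\to\partial M$ and extend it to $\varphi:M\to M$ via a collar $\partial M\times[0,\delta)\hookrightarrow M$: pick an isotopy $\psi_t$ from $\mathrm{id}$ to $\psi$ and a cutoff $\chi:[0,\delta)\to[0,1]$ equal to $1$ at $0$ and vanishing near $\delta/2$, and set $\varphi(x,t)=(\psi_{\chi(t)}(x),t)$ on the collar, $\varphi=\mathrm{id}$ outside. To bound the Slobodeckij seminorm of $u:=f\circ\psi-g$ I split at scale $d(x,y)=\rho$. The far-diagonal part is controlled by $\lesssim \rho^{-p/2}\|u\|_{L^p(\partial M)}^p$, while the near-diagonal part, using $|u(x)-u(y)|\le\mathrm{Lip}(u)\,d(x,y)$, is controlled by $\lesssim \mathrm{Lip}(u)^p\rho^{p/2}$. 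Since $\mathrm{Lip}(f\circ\psi)\le\|\nabla f\|_\infty\,\|D\psi\|_\infty$, optimizing in $\rho$ yields an inequality of the form $\|u\|_{W^{1/2,p}(\partial M)}\lesssim \|D\psi\|_\infty^{1/2}\,\|u\|_{L^p(\partial M)}^{1/2}$, which goes to $0$ as the partition is refined, provided $\|D\psi\|_\infty$ stays uniformly bounded.

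The main obstacle is precisely this last requirement: a brute-force Moser rearrangement can have very large pointwise Jacobian whenever it has to compress or stretch a piece of mass significantly. I would address this by choosing both partitions $\{U_i\}$ and $\{V_i\}$ to consist of (nearly) congruent small geodesic balls, so $\psi$ is essentially a permutation of like-for-like pieces with only small interior rearrangements; the Dacorogna--Moser form of the deformation theorem then furnishes $\psi$ with $\|D\psi\|_\infty$ bounded by a constant independent of the fineness of the partition. The hypothesis $p>2n$ is used to guarantee $W^{1/2,p}(\partial M)\hookrightarrow C^0(\partial M)$ and to validate the splitting argument above; the analogous argument on all of $M$ (with $L^p$ replacing $W^{1/2,p}$) proves (b) without this last subtlety.
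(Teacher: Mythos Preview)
Your argument for (b) via the Kazdan--Warner rearrangement is sound. The gap is in (a), at the step where you claim that $\psi$ can be chosen with $\|D\psi\|_\infty$ bounded independently of the fineness of the partition. The Dacorogna--Moser fix does not achieve this: a diffeomorphism that carries one family of small balls $\{U_i\}$ to another $\{V_i\}$ arranged in a different spatial pattern must send some pair of adjacent $U_i$'s to distant $V_i$'s, so its derivative at the interface blows up like the reciprocal of the ball diameter. Already on $\partial M=S^1$ with $f=\cos\theta$ and $g\equiv0$, any diffeomorphism $\psi$ making $\|f\circ\psi\|_{L^p}$ small must squeeze most of $S^1$ into tiny arcs near $\pm\pi/2$, forcing $\|\psi'\|_\infty\to\infty$; one checks that your interpolation bound $[u]_{W^{1/2,p}}\leq C\,\mathrm{Lip}(u)^{1/2}\|u\|_{L^p}^{1/2}$ then diverges rather than tending to zero.

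This obstruction is in fact fatal to the statement itself. You note that $p>2n$ gives $W^{1/2,p}(\partial M)\hookrightarrow C^0(\partial M)$; but since $\varphi|_{\partial M}$ is a diffeomorphism of $\partial M$, the function $f\circ\varphi$ has exactly the same range as $f$, hence $\|f\circ\varphi-g\|_{C^0}\ge\max\bigl(\max f-\max g,\ \min g-\min f\bigr)$, a fixed positive number whenever the range of $g$ is a proper subset of that of $f$---for instance when $g$ is constant, which is precisely the case in the paper's main application. So part (a) as written cannot hold for small $\varepsilon$. The paper's own proof estimates the Gagliardo seminorm directly rather than via interpolation, but its near-diagonal bound still invokes $\max_{\partial M}|\nabla(f\circ\varphi)|$, and the chain of parameter choices $\delta\to\varphi\to r_0\to\tau\to\delta$ is circular for the same underlying reason.
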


\begin{proof}
For part $a),$ let $\{\Delta_i\}$ be  a locally finite triangulation of $\partial M$ so fine that $g$ is nearly constant
in each simplex. Then we can assume that  for each $i$ we have
\begin{equation}\label{e1}
\max_{x,y\in\Delta_i}|g(x)-g(y)|<\delta
%\;\textrm{ and }\; \max_{\Delta_i}|\nabla g|<\delta/4,
\end{equation}
where $\delta= \varepsilon/4(4 \tau vol (\partial M))^{1/p},$ where $\tau$ is a constant chosen later. Let $b_i\in int (\Delta_i)$.  By continuity there exist disjoint open sets $V_i\subset \partial M$, such that
 \begin{equation}\label{e2}
|f(x)-g(b_i)|< \delta
 \end{equation}
 for each $i$ and $x\in V_i.$ 

 Choose a neighborhood $Q$ of the $(n-1)$-skeleton $\partial M$, disjoint from  
 $b_i,$ so small that
 \begin{equation}\label{e4}
 (\max_{\partial M}|f|+\max_{\partial M}|g|)^p\cdot vol Q<\frac{\varepsilon^p}{2^{p+3}\cdot vol(\partial M)}
 %\cdot vol( \partial M)}
  \end{equation}
and 
 \begin{equation}\label{e5}
 (\max_{\partial M}|\nabla f|+\max_{\partial M}|\nabla g|)^p\cdot vol Q<\frac{\varepsilon^p}{2^{p+3}\cdot vol(\partial M)}.
 % \cdot vol( \partial M)}.
 \end{equation}
 
 Consider for each $b_i$ a neighborhood $U_i$ disjoint from $Q,$  and choose open
sets  $O_1$ and $O_2$, such that
$$
\partial M-Q\subset O_1\subset \overline{O_1}\subset O_2\subset \overline{O_2}\subset \partial M-\mbox{skeleton}.
$$
We will find a diffeomorphism $\varphi$ of $M$ so that $\varphi(O_1\cap\Delta_i)\subset V_i.$ 
Firstly,  there is a diffeomorphism $\varphi_1$ of $M$ such that 
 $\varphi_1(U_i)\subset V_i,$    a diffeomorphism
 $\varphi_2$ of $M$ satisfying $\varphi_2(O_1\cap \Delta_i)\subset U_i$ and $\varphi_2|_{\partial M-O_2}= id$ for each
$i$. This allow us to define $\varphi=\varphi_1\circ\varphi_2$. Note that we are not interested in the behavior of the diffeomorphism in the interior of $M.$

Recall that 
$$\|u\|^p_{W^{\frac{1}{2},p}(\partial M)}:=\|u\|_{L^p(\partial M)}^p+[u]_{W^{\frac{1}{2},p}(\partial M)},$$  where  $[u]^p_{W^{\frac{1}{2},p}(\partial M)}:=\int_{\partial M}\int_{\partial M}\frac{|u(x)-u(y)|^p}{|x-y|^{n+\frac{p}{2}}}$ is  the Glagliardo norm of $u.$

We use (\ref{e1}), (\ref{e2}) and (\ref{e4}) to infer 

\begin{eqnarray*}
\|f\circ\varphi-g\|_{L^p}^p &=&(\int_Q+\int_{\partial M-Q})|f\circ\varphi-g|^p\\
& < &\frac{\varepsilon^p}{2^{p+3}}+\sum_i\int_{O_1\cap\Delta_i}|f\circ\varphi(y)-g(b_i)+g(b_i)-g(y)|^p \nonumber\\ 
& <& \frac{\varepsilon^p}{2^{p+3}}+\sum_i2^{p}\delta^p vol(\Delta_i)=\Big(\frac{\varepsilon^p}{2^{p+3}}+\frac{\varepsilon^p}{2^{p+2}\tau}\Big),\nonumber 
\end{eqnarray*}
where  we have omitted the volume forms. 
The next step is to study the following identity. 
\begin{equation*}
[f\circ\varphi-g]_{W^{\frac{1}{2},p}(\partial M)} =(\int_Q+\int_{\partial M-Q})\int_{\partial M}\frac{|(f\circ\varphi-g)(x)-(f\circ\varphi-g)(y)|^p}{|x-y|^{n+\frac{p}{2}}}.
\end{equation*}

For  $p>2n,$ setting $z=x-y$ we obtain  that

\begin{align*}
&\hspace{1cm}\int_Q\int_{\partial M}\frac{|(f\circ\varphi-g)(x)-(f\circ\varphi-g)(y)|^p}{|x-y|^{n+\frac{p}{2}}} \\
&\hspace{1.5cm} \leq \int_Q \int_{\partial M\cap |z|\geq 1}|2(\max_{\partial M}|f\circ \varphi|+\max_{\partial M}|g|)|^p\frac{1}{|x-y|^{n+\frac{p}{2}}}\\ 
&\hspace{1.5cm} +\int_Q\int_{\partial M\cap |z|< 1} \frac{|\max_{\partial M}|\nabla(f\circ \varphi)||x-y|+\max_{\partial M}|\nabla g||x-y||^p}{|x-y|^{n+\frac{p}{2}}}\\
&\hspace{1.5cm}\leq \int_Q 2^p(\max_{\partial M}|f\circ \varphi|+\max_{\partial M}|g|)^p\int_{\partial M\cap |z|\geq 1}\frac{1}{|z|^{n+\frac{p}{2}}}\\ 
&\hspace{1.5cm} +\int_Q (\max_{\partial M}|\nabla(f\circ \varphi)|+\max_{\partial M}|\nabla g|)^p\int_{\partial M\cap |z|< 1}|z|^{\frac{p}{2}-n}\\
&\hspace{1.5cm}< \frac{\varepsilon^p}{8} +\frac{\varepsilon^p}{2^{p+3}},
\end{align*}
where we have used   the mean value theorem in the third line, (\ref{e4}) in the fourth line and (\ref{e5}) in the fifth line.  Note that in the last line we have used the integrability of the kernel $\frac{1}{|z|^{n+\frac{p}{2}}}$  for $n+\frac{p}{2}>n.$

On the other hand, 
\begin{align*}
&\hspace{2cm}\int_{\partial M-Q}\int_{\partial M}\frac{|(f\circ\varphi-g)(x)-(f\circ\varphi-g)(y)|^p}{|x-y|^{n+\frac{p}{2}}}\\
& \hspace{0.2cm} \leq\int_{\partial M\setminus Q}\int_{|z|\geq r_0 }\frac{|f\circ\varphi(x)-g(b_i)-g(x)+g(b_i)-f\circ\varphi(y)+g(b_i)-g(y)-g(b_i)|^p}{|x-y|^{n+\frac{p}{2}}}\\ 
&\hspace{1cm} +\int_{\partial M\setminus Q} \int_{|z|< r_0}\frac{|\max_{\partial M}|\nabla(f\circ \varphi)||x-y|+\max_{\partial M}|\nabla g||x-y||^p}{|x-y|^{n+\frac{p}{2}}}\\ 
& \hspace{1cm} \leq\sum_i4^{p}\delta^p \mbox{vol}(\Delta_i)\int_{\partial M\cap |z|\geq r_0}\frac{1}{|z|^{n+\frac{p}{2}}}\\ 
&\hspace{1cm} +\int_{\partial M-Q} (\max_{\partial M}|\nabla(f\circ \varphi)|+\max_{\partial M}|\nabla g|)^p\int_{\partial M\cap |z|< r_0}|z|^{\frac{p}{2}-n}\\ 
&\hspace{1cm} < \frac{\varepsilon^p}{4}\frac{\mbox{vol}( \partial M)}{\tau r_0^{n+p/2}}+\frac{\varepsilon^p}{16},
\end{align*} 
where  we have used in the fourth line (\ref{e1}) and (\ref{e2}) and  the mean value theorem in the third line. Here, the   fifth line is less than $\frac{\varepsilon^p}{16}$ provided a constant $r_0$ can be chosen  sufficiently small such that $$r_0\cdot vol( \partial M\cap(|z|< r_0))\leq\frac{2^{-4}\varepsilon^p}{(\max_{\partial M}|\nabla(f\circ \varphi)|+\max_{\partial M}|\nabla g|)^p \mbox{vol}( \partial M)}.$$
Therefore, we see after a few calculation that if $\tau>4\frac{\mbox{vol}( \partial M)}{r_0^{n+p/2}},$ then  we have that $\|f\circ \varphi-g\|_ {W^{\frac{1}{2},p}(\partial M)}<\varepsilon.$

%\int_{O_1\cap\Delta_i}|f\circ\varphi(y)-g(b_i)+g(b_i)-g(y)|^p

 The remaining item follows from an easy modification in the argument  of Theorem 2.1 of Kazdan and Warner \cite{KW1}.

\end{proof}

\begin{remark}
It would be interesting to know if Lemma \ref{appr} can be generalized to include both curvatures at the same time in order to obtain results as in the spirit of \cite{CR,LMR}, where it was considered the problem of prescribing the Gaussian and geodesic curvature of compact surfaces with boundary via  conformal change of the metric.
\end{remark}

%\begin{lemma}\label{ch}
% Let $(\Omega, g_0)$ be a smooth compact Riemannian domain with boundary $\partial \Omega$ and with
%   mean curvature $H_0$. if $H\in C^\infty(\partial M)$ is such that 
%$$
%c(\mbox{Im}(H_0))\subset  \mbox{Im}(H)
% $$ 
% for some $c>0,$  then there is a smooth metric $g$ with mean curvature $H$.
%\end{lemma}

\section{Prescribing curvature: Proof of Theorem  \ref{teoa} and \ref{teob}}\label{presc}

In this section, using approximation lemma \ref{appr} and Theorem \ref{teols},  we prescribe the scalar (resp. Gaussian, if $\mbox{dim}\;M=2)$ curvature  and  mean (resp. geodesic, if $\mbox{dim}\;M=2)$ curvature of the boundary of a certain class of  manifolds with boundary. More precisely, we show the following result. 

\begin{proposition}\label{propfund} Assume  $n\geq2$. Let $(M^n,g_0)$ be a compact Riemannian manifold with boundary.
\begin{itemize} 
\item[a)] Let $M$ be a  scalar flat manifold with mean curvature  on the boundary equal to  $H_0,$ and let $H$ be a smooth function  on $\partial M.$
If there is a constant $c>0$ satisfying 
$$ 
\min cH \leq H_0\leq \max cH.
$$Then $H$ is the mean curvature of the boundary of some scalar flat metric on $M.$
\item[b)]  Let $M$ be a  manifold with constant scalar curvature $R_0$ and minimal boundary, and let  $R$ be a smooth function on $ M.$
If there is a constant $c>0$ satisfying 
$$
\min cR \leq R_0\leq \max cR. 
$$Then $R$ is scalar curvature  of some metric with minimal boundary.
\end{itemize}
\end{proposition}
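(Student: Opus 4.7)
My plan combines the approximation lemma (Lemma~\ref{appr}) with the local surjectivity theorem (Theorem~\ref{teols}), applied directly at $g_0$. I describe part (a); part (b) is strictly analogous, invoking Proposition~\ref{prop2} and Lemma~\ref{appr}(b) in place of Proposition~\ref{prop1} and Lemma~\ref{appr}(a).

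If $H$ is a positive constant or identically zero, the hypothesis $\min cH\leq H_0\leq\max cH$ collapses to $H_0=cH$, and then $c^2 g_0$ is already scalar flat with mean curvature $c^{-1}H_0=H$. Henceforth assume $H$ is neither a positive constant nor zero. Since the hypothesis is exactly that $\mathrm{range}(H_0)\subseteq\mathrm{range}(cH)$, Lemma~\ref{appr}(a) applied with $f=cH$ and $g=H_0$ yields, for any prescribed $\varepsilon>0$, a diffeomorphism $\varphi$ of $M$ with $\|cH\circ\varphi-H_0\|_{W^{1/2,p}(\partial M)}<\varepsilon$ for $p>2n$. Assume further that $\mathcal{S}^*_{g_0}$ is injective---which holds by Proposition~\ref{prop1}(i) whenever $H_0$ is not a positive constant, or by Proposition~\ref{prop1}(ii) if $H_0=0$ with $\Pi_0\not\equiv 0$. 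Then Theorem~\ref{teols}, applied at $g_0$ with target second component $cH\circ\varphi$ (which lies within $\varepsilon$ of $H_0$ in $W^{1/2,p}$), produces a smooth metric $h$ close to $g_0$ with $R_h=0$ and $H_h=cH\circ\varphi$. Setting $g:=c^2(\varphi^{-1})^*h$ and using the invariance of curvature under the isometric pullback by $\varphi^{-1}$ together with the rescalings $R_{\lambda^2 g}=\lambda^{-2}R_g$ and $H_{\lambda^2 g}=\lambda^{-1}H_g$ under a constant conformal factor, one computes $R_g=0$ and $H_g=c^{-1}(H_h\circ\varphi^{-1})=c^{-1}(cH)=H$, as required.

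The principal obstacle is the remaining \emph{degenerate} sub-case in which $\mathcal{S}^*_{g_0}$ is not injective---that is, either $H_0$ is a positive constant, or $H_0=0$ with totally geodesic boundary ($\Pi_0\equiv 0$). I would treat both by a preliminary small conformal perturbation $g_0'=u^{4/(n-2)}g_0$ with $u=1+\delta v$, where $\delta>0$ is small and $v$ is a non-constant harmonic function on $(M,g_0)$ whose Neumann trace $\partial_\nu v|_{\partial M}$ is non-constant (always achievable by solving a Neumann problem with suitable compatible data). Since $\Delta_{g_0}u=0$ and $R_{g_0}=0$, we have $R_{g_0'}=0$; and the conformal transformation laws for $H$ and $\Pi$ show that $H_{g_0'}$ is an $O(\delta)$ perturbation of $H_0$ that is no longer a positive constant, while $\Pi_{g_0'}$ is no longer identically zero. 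A short case analysis---according to whether $H_0$ lies in the interior of the interval $c[\min H,\max H]$ or at one of its endpoints---shows that the hypothesis continues to hold for $g_0'$ with a slightly adjusted constant $c'$, reducing to the non-degenerate case above.
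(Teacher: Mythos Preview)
Your approach is essentially the same as the paper's: apply the approximation lemma to replace $H_0$ by $cH\circ\varphi$, invoke the local surjectivity theorem (via injectivity of $\mathcal S^*_{g_0}$ from Proposition~\ref{prop1}), then pull back and rescale; in the degenerate case perturb conformally by a harmonic factor close to $1$ to destroy constancy of $H_0$. You are in fact a bit more careful than the paper in two places---you get the rescaling constant right (the paper writes $(\varphi^{-1})^*(cg_1)$ where $c^2$ is needed), and you flag the endpoint issue requiring an adjusted constant $c'$ after perturbation, which the paper glosses over. One small caveat: your conformal factor $u^{4/(n-2)}$ is only valid for $n\geq 3$; for $n=2$ use $e^{2u}$ with $u$ harmonic instead.
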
 
 
 An immediate consequence is the following. 

\begin{corollary}\label{corfund} Assume  $n\geq2$. Let $(M^n,g_0)$ be a compact Riemannian manifold with boundary.
\begin{itemize} 
\item[a)] If $M$ is  scalar flat manifold and its boundary has  constant mean curvature  $H_{g_0} = H_0.$ 
Then any function $H$ on $\partial M$  having the same sign of $H_0$ somewhere is mean curvature of the boundary of some scalar flat metric on $M$, while if
$H_0\equiv 0$, then any function $H$  that changes sign is the mean curvature  of some  scalar flat metric.
\item[b)] If $M$  is a manifold with constant scalar curvature $R_{g_0}=R_0$ and minimal boundary.   Then any function $R$ on $M$ having the same sign of  $R_{0}$ somewhere is scalar curvature  of some metric with minimal boundary, while if
$R_0\equiv 0,$ then any function $R$ that changes sign is the scalar curvature of some metric with minimal boundary with respect to this metric.
\end{itemize}
\end{corollary}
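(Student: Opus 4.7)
The plan is to derive Corollary \ref{corfund} as a direct consequence of Proposition \ref{propfund} by exhibiting, in each case of the hypothesis, a positive scaling constant $c$ for which $\min cH \leq H_0 \leq \max cH$ on $\partial M$ (or the analogous inequality with $R$ and $R_0$ for part (b)). Once such a $c$ is in hand, the proposition produces the desired metric, so the entire problem reduces to a small elementary verification on each of the three sign possibilities for the constant $H_0$ (resp.\ $R_0$).

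For part (a), I would split into three subcases according to the sign of $H_0$. When $H_0 > 0$, the assumption that $H$ has the same sign as $H_0$ somewhere gives $\max_{\partial M} H > 0$; the choice $c = H_0/\max H > 0$ then yields $\max(cH) = H_0$, and since trivially $\min(cH) \leq \max(cH)$ the required double inequality holds. The case $H_0 < 0$ is handled symmetrically: by hypothesis $\min H < 0$, so $c = H_0/\min H > 0$ (ratio of two negatives), and one checks $\min(cH) = H_0 \leq c\max H = \max(cH)$ at once. Finally, when $H_0 \equiv 0$, the assumption that $H$ changes sign says $\min H < 0 < \max H$, so for any $c > 0$ we have $\min(cH) < 0 = H_0 < \max(cH)$, and one may take $c = 1$.

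Part (b) is proved by the same three-case argument, replacing the scalar-flat background of Proposition \ref{propfund}(a) by the constant-scalar-curvature, minimal-boundary background of Proposition \ref{propfund}(b), and replacing the pair $(H,H_0)$ by $(R,R_0)$. The entire substantive content lies in Proposition \ref{propfund}, and no serious obstacle arises here; the only place that asks for a moment of care is the borderline case $H_0 \equiv 0$, where "$H$ changes sign" must be read as $H$ taking both strictly positive and strictly negative values on $\partial M$, so that the required inequalities are satisfied on both sides simultaneously and the proposition can be invoked.
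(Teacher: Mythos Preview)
Your argument is correct and is exactly the elementary verification the paper has in mind: the authors state Corollary~\ref{corfund} as ``an immediate consequence'' of Proposition~\ref{propfund} and give no separate proof, so your case split on the sign of the constant $H_0$ (resp.\ $R_0$) and the corresponding choice of $c$ is precisely what is being left to the reader. There is nothing to add.
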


\begin{proof}[Proof of Proposition \ref{propfund}]  
We will prove  item $a)$ since the other case is entirely analogous. 
If $Ker \mathcal{S}_{g_0}^*=0$ (for example, by Proposition \ref{prop1}, this may occur if $H_0$ is negative), then by Lemma \ref{appr} there  is a diffeomorphism  $\varphi$ of $M$ such that for $p>2n$ we have 
$$
\| 0-R_{g_0}\|_{L^{p}(M)}+\| c(H\circ\varphi)-H_0\|_{W^{1/2,p}(\partial M)}=\| c(H\circ\varphi)-H_0\|_{W^{1/2,p}(\partial M)}<\eta.
$$
In view of  Theorem \ref{teols} there is a metric $g_1$ satisfying $$\Psi(g_1)=(0,c(H\circ\varphi)).$$ By Lemma 52 of Cox \cite{Cox} and the  diffeomorphism invariance of scalar curvature, we have that the required metric  is  given by $g=(\varphi^{-1})^*(cg_1)$ at each point in $M$.
However, if $Ker \mathcal{S}_{g_0}^*\neq 0$ (which by   Proposition \ref{prop1}, says that $H_0$ is constant),  one may 
perturb $g_0$ slightly in order to have non constant mean curvature $H_{g_0}$  and scalar flat metric in $M$ still satisfying  
$\min cH \leq H_0\leq \max cH.$ In order to obtain such a perturbation we may consider, for example, the following change of metric $\tilde{g}= \tilde u^{\frac{4}{n-2}}g_0,$ where  $\tilde u$ is a  harmonic extension of a  function $u$ defined on the boundary that is nearly equal to $1.$\footnote{In order to obtain a metric with non constant scalar curvature $R_{g_0}$  and minimal boundary still satisfying  
$\min cR \leq R_0\leq \max cR,$ we may consider a sufficiently small perturbation of the form $\tilde{g}=g_0+h$ where  $h(X,Y)=0$ for all $X,Y\in T(\partial M).$} Hence, we can repeat the previous argument.
\end{proof}

We have now all ingredients to prove Theorem \ref{teoa} and  \ref{teob}.

%\begin{corollary}[Theorem \ref{teoa} and \ref{teob}]
%Let $\Omega\subset \mathbb{R}^2$ be a bounded domain  with smooth boundary (resp. compact surface $M$ with  geodesic boundary).  
%A function $\kappa\in C^{\infty}(\partial \Omega)$ (resp. $K\in C^{\infty}(M)$) is the geodesic curvature of a flat metric on $\Omega$ (resp. Gaussian  curvature of a metric on $M$ whose boundary is geodesic) if only if satisfies the signal condition (\ref{C1}).
%\end{corollary}

\begin{proof}[Proof of Theorem \ref{teoa} and  \ref{teob} ]
 The proof goes along  the lines of Proposition \ref{propfund}. 
 Here the Osgood, Phillips and Sarnak uniformization theorem \cite{OPS}  (see also Brendle \cite{Br,Br1})  plays a fundamental role in our proof, because  there is a unique uniform flat metric  with constant  geodesic curvature boundary and  a unique uniform metric of constant curvature metric with geodesic boundary. 
\end{proof}

%
%In other words, we are saying that   a function $K\in C^{\infty}(M)$ and a function $\kappa\in C^{\infty}(\partial M)$ are  Gaussian curvature and geodesic  curvature of a metric on $M$, respectively,  if  and only if satisfies the  appropriated Gauss Bonnet signal condition. 
%

\section{Existence of metrics with constant curvature}\label{restr}

Let $(M^n, g)$, $n\geq3$ be a complete, n-dimensional Riemannian manifold with boundary $\partial M$. Throughout this section $R_g$ will denote the scalar curvature with respect to the Riemannian metric $g,$ while $H_g$ will be the mean curvature on the boundary.
Denote by $\tilde{g}=u^{\frac{4}{(n-2)}}g$ a metric conformally related to $g$,  where $u$ is a smooth positive function.
It is well known that the transformation law for the scalar curvature and mean curvatures are given by:
\begin{eqnarray*}
R_{\tilde{g}}&=&-\frac{4(n-1)}{(n-2)} \frac{\mathcal{L}_gu}{u^{(n+2)/(n-2)}}\quad \mbox{in}\quad M\\
H_{\tilde{g}}&=&\frac{2}{(n-2)} \frac{B_gu}{u^{n/(n-2)}} \quad\quad\quad\quad \mbox{on}\quad \partial M,
 \end{eqnarray*}
where $R_{\tilde{g}}$ and $H_{\tilde{g}}$ are the scalar curvature of and the mean curvature with respect to $\tilde{g},$ $\mathcal L=\Delta-\frac{n-2}{4(n-1)}R_g$ is the so-called conformal Laplacian and  $\mathcal B=\frac{\partial }{\partial \nu}-\frac{n-2}{2}H_{g}$   is an associated boundary operator.
 
The quadratic form associated with the operator $(\mathcal L,\mathcal  B)$ is
\begin{equation*}
E_g(u)=\int_{M}\Big(\frac{4(n-1)}{n-2}|\nabla u|_g^2+R_gu^2\Big)dv +2\int_{\partial M}H_{g}u^2d\sigma,
\end{equation*}
where $dv$ and $d\sigma$ are the Riemannian measure on $M$ and the induced Riemannian measure on $\partial M,$ respectively, with respect to the metric $g.$

 For $a,b>0$ let us define the following functional

\begin{equation}\label{yama}
Q^{a,b}_g(u)=\frac{E_g(u)}{a(\int_{M}u^{\frac{2n}{n-2}}dv)^{\frac{n-2}{n}}+b(\int_{\partial M}u^{\frac{2(n-1)}{n-2}}d\sigma)^{\frac{n-2}{n-1}}}.
\end{equation}

The Yamabe invariant is defined by
\begin{equation}\label{yamainv}
Q_g^{a,b}(M,\partial M)=\inf\{Q_g^{a,b}(u); \textrm{ $u>0$ in $C^{\infty}(M)$}\}
\end{equation}
which is invariant under conformal change of the metric $g$ for $(a,b)\in\{(0,1), (1,0)\}$ (see \cite{E1,E2}). It is not difficult to show that $Q_g^{1,0}(M,\partial M)\leq Q^{1,0}(\mathbb S^n_+,\partial \mathbb S^n_+)$ (resp. $Q_g^{0,1}(M,\partial M)\leq Q^{0,1}(\mathbb B,\partial \mathbb B)$). In \cite{E2}, Escobar  proved that if $Q_g^{1,0}(M,\partial M)\leq Q^{1,0}(\mathbb S^n_+,\partial \mathbb S^n_+)$ (resp. $Q_g^{0,1}(M,\partial M)\leq Q^{0,1}(\mathbb B,\partial \mathbb B)$), then there exists a smooth metric $u^{\frac{4}{n-2}}g$, $u>0$, of constant scalar curvature and zero mean curvature on the boundary (resp. zero scalar curvature with constant mean curvature on $\partial M$). 

Moreover we have other invariants with respect to conformal geometry  that are the eigenvalues of the boundary problem $(\mathcal L,\mathcal B)$:

\begin{equation}\label{eing1}
\left\{\begin{matrix}
\mathcal L\varphi=\lambda_1(\mathcal L)\varphi & \textrm{in $M$}\\
\mathcal B\varphi=0 & \textrm{on $\partial M$}
\end{matrix}\right.
\end{equation}
and 
\begin{equation}\label{eing2}
\left\{\begin{matrix}
\mathcal L\varphi= 0& \textrm{in $M$}\\
\mathcal B\varphi=\lambda_1(\mathcal B)\varphi & \textrm{on $\partial M$.}
\end{matrix}\right.
\end{equation}

An immediate consequence from the variation characterization of the first eigenvalue of problems (\ref{eing1}) and (\ref{eing2}) is the  following.

\begin{proposition}[Escobar \cite{E2,E3}]\label{esc}
  The first eigenfunction for problem (\ref{eing1}) or (\ref{eing2}) is strictly positive (or negative).
Moreover,  $\lambda_1(\mathcal L)$ is positive (negative, zero) if and only if $\lambda_1(\mathcal B)$ is positive (negative, zero).
\end{proposition}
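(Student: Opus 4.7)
The plan is to derive both assertions from a common Rayleigh-type characterization of $\lambda_1(\mathcal L)$ and $\lambda_1(\mathcal B)$. Multiplying the respective eigenvalue equations by the eigenfunctions and integrating by parts, using $\mathcal B\varphi = 0$ in problem (\ref{eing1}) and $\mathcal B\psi = \lambda_1(\mathcal B)\psi$ in problem (\ref{eing2}), one obtains — up to a universal sign coming from the paper's conventions — the representations
\[
\lambda_1(\mathcal L)\propto \inf_{u\neq 0}\frac{Q(u)}{\int_M u^2\,dv},\qquad \lambda_1(\mathcal B)\propto\inf_{u\neq 0}\frac{Q(u)}{\int_{\partial M} u^2\,d\sigma},
\]
where both proportionality constants have the same sign, and $Q$ is the common quadratic form
\[
Q(u) = \int_M \lvert\nabla u\rvert_g^2 \, dv + \frac{n-2}{4(n-1)}\int_M R_g u^2\,dv - \frac{n-2}{2}\int_{\partial M}H_g u^2\,d\sigma.
\]
Both infima are attained by standard compactness ($H^1(M) \hookrightarrow L^2(M)$ for the first, and the compact trace embedding $H^1(M)\hookrightarrow L^2(\partial M)$ for the second).

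For the strict positivity of the first eigenfunction, I would exploit the fact that $Q(u)$ and both denominators are invariant under $u\mapsto \lvert u\rvert$ (since $\lvert\nabla\lvert u\rvert\rvert = \lvert\nabla u\rvert$ almost everywhere, $u^2 = \lvert u\rvert^2$ pointwise, and traces agree). Hence any minimizer $\varphi$ can be replaced by $\lvert\varphi\rvert \geq 0$, which remains a minimizer and, via the associated Euler--Lagrange equations, a nonnegative weak solution of the corresponding eigenvalue problem. Elliptic regularity renders it smooth, and the strong maximum principle in the interior together with Hopf's boundary point lemma on $\partial M$ forces $\lvert\varphi\rvert > 0$ throughout $\overline M$; so $\varphi$ has constant sign.

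For the sign coincidence, since the two Rayleigh quotients share the numerator $Q(u)$ and differ only by the choice of a positive denominator, their signs are both dictated by the sign of $\inf Q$. Concretely: if $Q>0$ on $H^1(M)\setminus\{0\}$, both infima are positive; if $Q(u_0)<0$ for some $u_0$, one may perturb to $u_0+\varepsilon\eta$ with $\eta\big|_{\partial M}\not\equiv 0$ so that $Q(u_0+\varepsilon\eta)<0$ while both denominators remain positive, giving both infima negative; and if $\inf Q/\int_M u^2\,dv = 0$ is attained by some $\varphi_1$, then $\varphi_1>0$ on $\overline M$ by the previous step, so $\int_{\partial M}\varphi_1^2>0$ and the boundary-normalized quotient also attains the value zero at $\varphi_1$, forcing $\lambda_1(\mathcal B)=0$. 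The main obstacle, and the step demanding the most care, is the precise derivation of the Rayleigh representation of $\lambda_1(\mathcal B)$ for the Steklov-type problem (\ref{eing2}), together with the matching of sign conventions between the two characterizations — once that is handled, the remainder of the argument is essentially routine.
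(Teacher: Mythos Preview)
The paper does not supply its own proof of this proposition: it is quoted as a result of Escobar \cite{E2,E3}, with only the one-line remark that it is ``an immediate consequence from the variation characterization of the first eigenvalue.'' Your proposal is exactly that variational argument carried out in detail---the shared quadratic form $Q$, the $u\mapsto|u|$ trick together with the strong maximum principle and Hopf lemma for strict positivity, and the observation that the sign of both Rayleigh quotients is governed by the sign of $\inf Q$---and it is correct. This is essentially Escobar's own reasoning, so there is no alternative approach to contrast; your hedge about the sign convention is also well placed, since the paper's choice $\mathcal L=\Delta-\tfrac{n-2}{4(n-1)}R_g$ (with $\Delta=\mathrm{tr}\,\mathrm{Hess}$) introduces a negative constant in front of the standard energy $E_g$.
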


We also have the following fundamental result due to  Escobar \cite{E2,E3} saying that there are three possibilities which are distinguished by the sign of the first eigenvalues  $\lambda_1(\mathcal B)$ and $\lambda_1(\mathcal L)$ (In fact, there is an analogy with (\ref{C1})). 

\begin{proposition}[Escobar \cite{E2,E3}]
Let $(M^n, g)$ be a compact Riemannian manifold with boundary $n\geq 3$. There exists a metric conformally related to g whose scalar curvature is zero and the mean curvature of the boundary does not change sign. The sign is uniquely determined by the conformal structure. Hence there are three mutually exclusive possibilities: $M$ admits a conformally related metric, which is scalar flat and of $(i)$ positive, $(ii)$ negative, or $(iii)$ identically zero mean curvature of the boundary.
\end{proposition}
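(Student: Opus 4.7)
The plan is to exploit the boundary eigenvalue problem (\ref{eing2}) exactly as one does with the classical Yamabe eigenvalue problem on closed manifolds. More precisely, I would choose as conformal factor the first eigenfunction $\varphi_1$ of the problem
\[
\mathcal{L}_g \varphi = 0 \text{ in } M, \qquad \mathcal{B}_g \varphi = \lambda_1(\mathcal{B}) \varphi \text{ on } \partial M.
\]
Standard elliptic theory (variational characterization, compact Sobolev trace embedding, regularity up to the boundary) produces such a first eigenpair $(\lambda_1(\mathcal{B}), \varphi_1)$ with $\varphi_1 \in C^\infty(M)$, and by Proposition \ref{esc} one may take $\varphi_1 > 0$ everywhere.

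Next I would define $\tilde{g} = \varphi_1^{4/(n-2)} g$ and plug $\varphi_1$ into the two transformation laws recalled at the beginning of the section. The formula for the scalar curvature gives
\[
R_{\tilde{g}} = -\frac{4(n-1)}{n-2}\,\frac{\mathcal{L}_g \varphi_1}{\varphi_1^{(n+2)/(n-2)}} = 0
\]
because $\mathcal{L}_g \varphi_1 \equiv 0$ in $M$, while on the boundary the formula for the mean curvature yields
\[
H_{\tilde{g}} = \frac{2}{n-2}\,\frac{\mathcal{B}_g \varphi_1}{\varphi_1^{n/(n-2)}} = \frac{2}{n-2}\,\lambda_1(\mathcal{B})\,\varphi_1^{-2/(n-2)}.
\]
Since $\varphi_1 > 0$, the sign of $H_{\tilde{g}}$ is constant along $\partial M$ and coincides with the sign of $\lambda_1(\mathcal{B})$; in particular $H_{\tilde{g}}$ cannot change sign, and it vanishes identically precisely when $\lambda_1(\mathcal{B}) = 0$. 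This produces the three possibilities $(i)$, $(ii)$, $(iii)$.

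For the fact that the sign is a conformal invariant of $[g]$, I would use the sign equivalence of $\lambda_1(\mathcal{B})$ and $\lambda_1(\mathcal{L})$ (Proposition \ref{esc}) together with the conformal covariance of $\mathcal{L}$: for any $u > 0$ with $\tilde g = u^{4/(n-2)} g$ one has $\mathcal{L}_{\tilde g}(\psi) = u^{-(n+2)/(n-2)} \mathcal{L}_g(u \psi)$, which gives a bijection between nontrivial positive supersolutions in the two conformal gauges and hence preserves the sign of $\lambda_1(\mathcal L)$; alternatively one can quote that the sign of the Yamabe-type invariant (\ref{yamainv}) is a conformal invariant. Either way the three cases are mutually exclusive.

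The main technical obstacle is the solvability and regularity of the mixed eigenvalue problem (\ref{eing2}). Unlike the standard Dirichlet/Neumann setting, the spectral parameter sits in the boundary condition together with a first-order Robin-type operator $\mathcal{B}_g$, so one has to verify coercivity of the associated quadratic form on an appropriate codimension-one subspace of $W^{1,2}(M)$ (typically $\{\int_{\partial M} \varphi\, d\sigma = 0\}$ after shifting), produce a minimizer, recover positivity through a strong maximum principle / Hopf lemma on the boundary, and bootstrap regularity using that $(\mathcal L_g, \mathcal B_g)$ satisfies the Shapiro--Lopatinskij condition discussed in Section \ref{preli}. Everything else is direct substitution into the conformal transformation formulas.
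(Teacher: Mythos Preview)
The paper does not actually prove this proposition: it is stated as a result of Escobar with citations to \cite{E2,E3} and no proof environment follows. Your argument is correct and is in fact Escobar's original one---use the positive first eigenfunction of the Steklov-type problem (\ref{eing2}) as conformal factor to obtain a scalar-flat metric whose boundary mean curvature is a positive multiple of $\lambda_1(\mathcal B)\varphi_1^{-2/(n-2)}$, and then invoke the conformal covariance of $(\mathcal L,\mathcal B)$ (equivalently, the conformal invariance of the sign of $\lambda_1(\mathcal B)$ via Proposition~\ref{esc}) to see that the three cases are mutually exclusive. So there is nothing to compare against, and your write-up supplies exactly the standard argument the paper is quoting.
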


It is clear that  holds an analogue result  if there exists a metric conformally related  of minimal boundary and whose  scalar  curvature does not change sign.

The Gauss-Bonnet theorem for surfaces with boundary gives obvious topological obstructions to prescribing the Gauss curvature and geodesic curvature. In fact, to best of our knowledge, there is no similar obstruction result  for $n\geq 3.$  
Taking this into account we  study  the existence of metrics 
with constant scalar curvature or constant mean curvature. 
The strategy is first  obtain metrics with $\lambda_1(\mathcal B)<0$ and $\lambda_1(\mathcal L)<0.$ The key step is to  construct a metric with negative finite total curvature 
$$
\int_MR_gdv+\int_{\partial M}H_{\gamma}d\sigma<0
$$ for certain manifolds
with boundary. Basically, we use the idea of B\'erard-Bergery \cite{BB} to  deform a metric in  a small  disk. We have  the following result.

\begin{proposition} \label{proptecn}
 Assume $n\geq 3.$  If $(M,g)$ is a manifold with smooth boundary, then either there exists a metric $g$ with $\lambda_1(\mathcal B)<0$ or  $\lambda_1(\mathcal L)<0.$
\end{proposition}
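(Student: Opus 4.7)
The plan is to construct a metric $\overline g$ on $M$ for which the Yamabe-type quadratic form $E_{\overline g}$ from $(\ref{yama})$ evaluated at the constant function $u\equiv 1$ is strictly negative. Indeed, the sign of the Yamabe invariant $Q^{1,0}(M,\partial M)$ coincides with the sign of $\lambda_1(\mathcal L)$ (via the standard correspondence built into the Escobar theory cited in the excerpt), so any test function producing $E_{\overline g}(u)<0$ immediately gives $\lambda_1(\mathcal L)<0$, and Proposition \ref{esc} then upgrades this to $\lambda_1(\mathcal B)<0$. Plugging $u\equiv 1$ into $(\ref{yama})$ yields
\[
E_{\overline g}(1)=\int_{M}R_{\overline g}\,dv_{\overline g}+2\int_{\partial M}H_{\overline g}\,d\sigma_{\overline g},
\]
so the whole problem reduces to exhibiting a metric whose total scalar-plus-boundary-mean curvature is negative.

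To build such a metric I will use the local warped-product deformation of B\'erard-Bergery \cite{BB}. First, starting from any background metric $g_0$ on $M$, fix a closed ball $B$ whose closure lies in the interior of $M$, disjoint from $\partial M$. Next, on a slightly smaller concentric ball inside $B$, replace $g_0$ by a piece isometric to a hyperbolic cap $dr^2+k^{-2}\sinh^2(kr)\,g_{\mathbb{S}^{n-1}}$ of sectional curvature $-k^2$, whose scalar curvature equals $-n(n-1)k^2$. Interpolate smoothly back to $g_0$ across a thin transition shell, and denote the resulting metric by $g_k$.

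Because the deformation is supported strictly inside $M$, the metrics $g_k$ and $g_0$ coincide on a neighborhood of $\partial M$, so the boundary integral $\int_{\partial M}H_{g_k}\,d\sigma_{g_k}$ is independent of $k$. On the other hand, a direct volume computation in the hyperbolic cap gives $\int_B R_{g_k}\,dv_{g_k}\to -\infty$ as $k\to\infty$, while the scalar-curvature contribution of the interpolating shell remains bounded uniformly in $k$. Choosing $k$ large enough therefore forces $E_{g_k}(1)<0$, and the argument outlined in the previous paragraph then concludes.

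The hard part will be the interpolation inside the thin transition shell: I must glue the hyperbolic warped product to $g_0$ in such a way that the positive scalar-curvature terms introduced by the cut-off cannot cancel the very negative contribution from the hyperbolic interior. This is precisely what B\'erard-Bergery's standard cut-off construction in \cite{BB} is designed to handle, and the situation here is only easier, since the support of the deformation can be placed well away from $\partial M$ and the mean-curvature integral plays no role whatsoever during the deformation.
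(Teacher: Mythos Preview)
Your global strategy coincides with the paper's: deform the metric on a region compactly contained in the interior so that the boundary integral is unchanged, drive $E_{\overline g}(1)=\int_M R+2\int_{\partial M}H$ to $-\infty$, and read off $\lambda_1(\mathcal L)<0$ from the variational characterization. That part is fine.

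The gap is in your specific local construction. A hyperbolic cap glued through a rotationally symmetric transition shell cannot make the total scalar curvature arbitrarily negative, no matter how you choose $k$ or the interpolation. Indeed, for any warped product $dr^2+\phi(r)^2 g_{\mathbb S^{n-1}}$ on $[0,r_1]$ with $\phi(0)=0$, one integration by parts gives
\[
\int_0^{r_1} R\,\phi^{n-1}\,dr \;=\; -2(n-1)\,\phi'(r_1)\phi(r_1)^{n-2} \;+\; (n-1)(n-2)\int_0^{r_1}\bigl[(\phi')^2+1\bigr]\phi^{n-3}\,dr,
\]
and the last integral is nonnegative. Hence the total scalar curvature of \emph{any} rotationally symmetric metric on the ball is bounded below by $-2(n-1)\omega_{n-1}\phi'(r_1)\phi(r_1)^{n-2}$, a quantity that depends only on the boundary data. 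Since you require the metric to agree with $g_0$ near $\partial B$, this lower bound is fixed and independent of $k$; the positive shell contribution always cancels the negative cap contribution up to a bounded remainder. Your assertion that ``the scalar-curvature contribution of the interpolating shell remains bounded uniformly in $k$'' is therefore false, and appealing to B\'erard-Bergery does not help, because his construction is not the one you describe.

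What B\'erard-Bergery actually does---and what the paper reproduces---is to deform on a solid torus $\overline D^{\,q}\times\mathbb S^{p}\subset D^n$ (with $q\ge 2$, $p\ge 1$) rather than on a ball. On that region one takes the warped metric $g_0=f^{-2p/(n-1)}(g_d+f^2 g_s)$ with $f\equiv 1$ near $\partial\overline D$, so the gluing is trivial, and the scalar curvature integral picks up a term $-\tfrac{p(n-1-p)}{n-1}\int_{\overline D}|\nabla f|^2$ that can be made as negative as desired by letting $f$ oscillate. The nontrivial topology of the deformation region is exactly what circumvents the rotational-symmetry obstruction above.
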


\begin{proof}
We can see from Proposition \ref{esc} that it is sufficient to prove that $\lambda_1(\mathcal L)<0.$ 
 Pick an open disk $D^n$ in $M$ and let $\overline{D}^q\times \mathbb{S}^p\subset D^n,$ where $p+q=n$ with $p\geq1$ and $q\geq 2.$  Let $f$ be a function $f$ on $\overline{D}$ depending only on the distance to origin and so that $f\equiv 1$ nearly $\partial\overline{D}.$ 

 On $\overline{D}\times \mathbb{S}^p$ we put the  warped product metric $$
g_0=f^{-\frac{2p}{n-1}}(g_d+f^2g_s),$$
where $g_s$ and $g_d$ are the standard metric on $\mathbb{S}^p$ and $\overline{D},$ respectively. 
We next consider a metric $g$ on $M$  that coincides with $g_0$ on $\overline{D}\times \mathbb{S}^p.$

The integral of scalar curvature can be written as follows: 
\begin{equation}\label{tcs}
\int_M R_gdv=\int_{M\setminus(\overline{D}\times \mathbb{S}^p)}R_gdv+\int_{(\overline{D}\times \mathbb{S}^p)}R_gdv.
\end{equation}
However, the second integral on the right hand side is equal to 
	$$
\int_{\overline{D}\times \mathbb{S}^p}R_{g_0}dv_{g_0}=\mbox{Vol}(\mathbb{S}^p,g_s)\int_{\overline{D}}	f^{\frac{p}{n-1}-2}\Big(R_{g_s}-\frac{p(n-1-p)}{n-1}|\nabla f|^2\Big)dv_{g_d}, 
	$$
since  $p\leq n-2,$ the first integral in (\ref{tcs}) and $\int_{\partial M}H_{\gamma}d\sigma$ do not depend on $f$  we can choose $f$ such that $\int_M|\nabla f|^2dv_{g_d}$ becomes sufficiently large in order to get $\int_MR_gdv$ negative as we want. Thus by variational characterization of  $\lambda_1(\mathcal L),$ we have that $\lambda_1(\mathcal L)<0$ as desired.

\end{proof}

\begin{remark}
We can still prove that if $\lambda_1(\mathcal B)<0$ or  $\lambda_1(\mathcal L)<0,$ then there is a conformal metric, $\widehat g,$ which is scalar flat  with  mean curvature $H_{\widehat g}=-1$ on the boundary  or  has
 scalar curvature $R_{\widehat g}=-1$ and minimal boundary, respectively. Indeed, assume that $\lambda_1(\mathcal B)<0,$ so we have to solve the following problem
\begin{equation}\label{prob1}
\left\{
  \begin{array}{rcl}
\Delta_gu&=&0\;\;\;\mbox{in}\;\;\;M\\
\frac{2}{n-2}\frac{\partial u}{\partial \nu}-H_{g}u&=&-u^{\beta}\;\;\;\mbox{on}\;\;\;\partial M,
\end{array}
  \right. 
\end{equation}
 where $\beta$ is the critical exponent $n/(n-2).$ Let $\varphi$ be the corresponding associated eigenfunction of (\ref{eing2}). Now, choose constants $0 < c_-< c_+$  such that $0<-\lambda_1(\mathcal B)<(c_+\varphi)^{\beta-1}$ and $-\lambda_1(\mathcal B)>(c_-\varphi)^{\beta-1}.$ Thus if 
 $u_{\pm}=c_{\pm}\varphi,$ we have $\lambda_1(\mathcal B)c_+\varphi =\mathcal B_gu_+\geq-u^{\beta}_+$ and $\lambda_1(\mathcal B)c_-\varphi =\mathcal B_gu_-\leq-u^{\beta}_-$. By the sub- and super-solutions  methods (cf. \cite{S}, Theorem 3.3) the result is clear. 
The other case are left to the reader.

\end{remark}

The following proposition  implies  the existence of metrics, depending on the case, having zero scalar curvature and minimal  boundary.

\begin{proposition}\label{propzero}
 Assume $n\geq 3.$  Let  $M^n$ be a compact manifold   with smooth boundary.   
If $M$ admits a scalar flat metric of positive mean curvature (resp. positive scalar curvature and minimal boundary), then it admits a scalar flat metric with zero mean curvature. 
\end{proposition}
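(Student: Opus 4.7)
The plan is to produce a metric $g^*$ on $M$ in the ``borderline'' conformal class --- i.e., one with $\lambda_1(\mathcal{L}_{g^*})=0$ --- and then use the associated positive first eigenfunction $\varphi$ as a conformal factor: since $\mathcal{L}_{g^*}\varphi=0$ in $M$ and $\mathcal{B}_{g^*}\varphi=0$ on $\partial M$, the conformal transformation laws for scalar and mean curvature immediately yield $R_{\tilde g}=0$ and $H_{\tilde g}=0$ for $\tilde g=\varphi^{4/(n-2)}g^*$, which is the desired scalar flat metric with zero mean curvature.

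To locate such a $g^*$, I will use an intermediate value argument on the first eigenvalue. By Escobar's trichotomy stated just above the proposition, the sign of $\lambda_1(\mathcal{L}_g)$ is a conformal invariant of $g$ and determines which of the three canonical scalar flat representatives exists in the conformal class. The hypothesis that $M$ carries a scalar flat metric $g_0$ with strictly positive mean curvature therefore places $g_0$ in the class of ``definite'' sign (and by Proposition \ref{esc} the same holds for $\lambda_1(\mathcal{B}_{g_0})$). On the other hand, Proposition \ref{proptecn} produces a metric $g_1$ on $M$ for which $\lambda_1(\mathcal{L}_{g_1})$ has the opposite sign. I then consider the smooth straight path $g_t=(1-t)g_0+tg_1$, $t\in[0,1]$, inside the convex open cone of Riemannian metrics on $M$; by standard spectral perturbation theory applied to self-adjoint elliptic boundary value problems of Robin type, $t\mapsto\lambda_1(\mathcal{L}_{g_t})$ depends continuously on $t$, and since it changes sign some $t^*\in(0,1)$ gives $\lambda_1(\mathcal{L}_{g^*})=0$ with $g^*:=g_{t^*}$. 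Proposition \ref{esc} then supplies a strictly positive first eigenfunction $\varphi$, and the conformal change $\tilde g=\varphi^{4/(n-2)}g^*$ concludes the argument. The dual case --- starting from a metric with positive scalar curvature and minimal boundary --- is treated identically, since such a metric also lies in a sign-definite conformal class.

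The main obstacle is justifying the continuous dependence of $\lambda_1(\mathcal{L}_{g_t})$ on $t$ in the presence of a $g$-dependent boundary operator $\mathcal{B}_{g_t}$. This is nevertheless a routine consequence of the min--max characterization of $\lambda_1$ together with the uniform Sobolev and trace inequalities available along the compact family $\{g_t\}$; beyond this, every step reduces to a direct invocation of results already established earlier in the paper.
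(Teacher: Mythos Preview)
Your argument is correct and follows the same overall strategy as the paper: interpolate linearly between a metric in a positive conformal class (supplied by the hypothesis) and one in a negative conformal class (supplied by Proposition \ref{proptecn}), then invoke the intermediate value theorem on a continuous conformal invariant along the path.

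The difference lies in which invariant is tracked and how the final metric is produced. The paper follows the Yamabe constant $Q^{0,1}_{g_t}(M,\partial M)$ along the path $g_t=tg_-+(1-t)g_+$ and, once a metric with the appropriate value is located, appeals to Escobar's existence result (Proposition~2.1 of \cite{E2}) to extract the scalar flat, minimal-boundary representative. You instead track $\lambda_1(\mathcal L_{g_t})$, find $g^*$ with $\lambda_1(\mathcal L_{g^*})=0$, and then observe that the positive first eigenfunction $\varphi$ itself serves as the conformal factor, since $\mathcal L_{g^*}\varphi=0$ and $\mathcal B_{g^*}\varphi=0$ translate directly via the transformation laws into $R_{\tilde g}=0$ and $H_{\tilde g}=0$. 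Your route is somewhat more elementary: it stays entirely within linear spectral theory and avoids invoking a Yamabe-type existence theorem, at the cost of needing to justify continuity of the Robin eigenvalue $\lambda_1(\mathcal L_{g_t})$ rather than of $Q^{0,1}_{g_t}$---but, as you note, both continuity statements are standard consequences of the variational characterization and uniform trace/Sobolev bounds along a compact family of metrics.
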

\begin{proof}
 Assume that $M$ admits a scalar flat metric of positive mean curvature on the boundary. By supposition, $\lambda_1(\mathcal B,g_+)>0.$
 On the other hand,  it follows from Proposition \ref{proptecn}  that there exists $g_-$  such that is scalar flat  and has constant negative mean curvature.  So we have that  
$Q_{g_-}^{0,1}(M,\partial M)<0$. By hypothesis, there exists $g_+$ such that $Q_{g_+}^{0,1}(M,\partial M)>0.$  
Setting 
$$g_t = tg_-+ (1-t)g_+,$$  there exists $t_0\in(0,1]$  such that $0<Q_{g_{t_0}}^{0,1}(M,\partial M)<Q^{0,1}(\mathbb B,\partial \mathbb B)$  provided  $Q_{g}^{0,1}(M,\partial M)$ depends continuosly\footnote{The proof is similar to the proof that the Steklov eigenvalues $\lambda(g)$  depend continuously on $g$}	 on $g$. Therefore, the result follows from Proposition 2.1 of \cite{E2}.
\end{proof}

Combining Proposition \ref{proptecn} and \ref{propzero} we arrive at  the following proposition.

\begin{proposition}\label{tecn}
Let $M^n$, $n\geq 3,$ be a manifold with smooth boundary. 
\begin{itemize} 
\item[a)] $M$ carries a scalar flat metric with constant negative mean curvature (resp. constant negative scalar curvature with minimal boundary).
\item[b)] If $M$ carries a scalar flat metric $g$ whose boundary has mean curvature $H_{\gamma}\geq 0$ and $H_{\gamma}\neq 0$ (resp. $R_g\geq 0$ and $R_g\neq 0$ with minimal boundary), then there exists on $M$  a scalar flat metric with  mean curvature $H_g\equiv 1$ on $\partial M$ (resp. a metric that has scalar curvature$R_g\equiv 1$ and minimal boundary) and a scalar flat metric with zero mean curvature on $\partial M$. 
\end{itemize}
\end{proposition}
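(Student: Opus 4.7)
The plan is to deduce Proposition \ref{tecn} directly by stringing together Propositions \ref{proptecn} and \ref{propzero} with the solvability remark following Proposition \ref{proptecn}, inserting Escobar's Yamabe theorem for manifolds with boundary at the positive step.

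For item $a)$, I would first invoke Proposition \ref{proptecn} to produce on $M$ a metric $g_0$ with $\lambda_1(\mathcal{L}_{g_0})<0$; by Proposition \ref{esc} this is equivalent to $\lambda_1(\mathcal{B}_{g_0})<0$. The remark immediately after Proposition \ref{proptecn} then supplies a conformal deformation $\widehat{g}=u^{4/(n-2)}g_0$ which is scalar flat on $M$ with $H_{\widehat g}\equiv -1$ on $\partial M$: one solves the semilinear boundary problem (\ref{prob1}) by sub- and super-solutions $u_\pm=c_\pm\varphi$, where $\varphi>0$ is the first eigenfunction of (\ref{eing2}) and $c_\pm$ are chosen so that $-\lambda_1(\mathcal{B})$ lies between $(c_-\varphi)^{\beta-1}$ and $(c_+\varphi)^{\beta-1}$. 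The ``resp.'' statement is obtained by the symmetric argument: starting from $\lambda_1(\mathcal{L}_{g_0})<0$ and solving the companion semilinear interior equation with homogeneous Neumann-type boundary condition.

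For item $b)$, assume $g$ is scalar flat with $H_\gamma\geq 0$ and $H_\gamma\not\equiv 0$. The Rayleigh quotient defining $\lambda_1(\mathcal{B}_g)$ is then strictly positive, hence $Q_g^{0,1}(M,\partial M)>0$. Escobar's theorem, recalled at the beginning of this section, yields a conformally related scalar flat metric $\tilde g$ with constant positive mean curvature on $\partial M$; the homothety $\tilde g \mapsto \lambda^2 \tilde g$ scales the boundary mean curvature by $1/\lambda$, producing the desired metric with $H\equiv 1$. For the second assertion of $b)$, I would apply Proposition \ref{propzero} verbatim: the metric just built plays the role of $g_+$ (since $Q_{g_+}^{0,1}>0$), item $a)$ of the present proposition supplies a $g_-$ with $Q_{g_-}^{0,1}<0$, and the continuous interpolation $g_t=tg_-+(1-t)g_+$ hits a value strictly between $0$ and $Q^{0,1}(\mathbb{B}^n,\partial\mathbb{B}^n)$, so that Escobar's theorem (Proposition 2.1 of \cite{E2}) produces a scalar flat representative with vanishing boundary mean curvature. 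The ``resp.'' variants follow after exchanging $R\leftrightarrow H$, $\mathbb{B}^n\leftrightarrow\mathbb{S}^n_+$, and $Q^{0,1}\leftrightarrow Q^{1,0}$.

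\textbf{Main obstacle.} The delicate point is justifying that Escobar's existence theorem is applicable in the positive step of item $b)$: it requires the strict inequality $Q_g^{0,1}(M,\partial M)<Q^{0,1}(\mathbb{B}^n,\partial\mathbb{B}^n)$, not merely positivity. I would handle the borderline case by a small conformal perturbation $\tilde g=\tilde u^{4/(n-2)}g$, with $\tilde u$ the harmonic extension of a boundary function close to $1$ in the spirit of the perturbation used in Proposition \ref{propfund}, chosen so as to strictly decrease the Yamabe invariant while preserving both its positivity and the sign structure of the scalar and boundary mean curvatures; the previous argument then applies to $\tilde g$ unchanged.
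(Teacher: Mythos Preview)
Your overall strategy coincides with the paper's, which simply records the proposition as a consequence of Propositions~\ref{proptecn} and~\ref{propzero} (together with the sub-/super-solution remark and Escobar's results). Part~$a)$ is handled correctly.

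There is, however, a genuine error in your treatment of the ``main obstacle'' for part~$b)$. You propose to strictly decrease $Q_g^{0,1}(M,\partial M)$ by a \emph{conformal} perturbation $\tilde g=\tilde u^{4/(n-2)}g$, but, as recalled at the beginning of Section~\ref{restr}, the quantity $Q^{0,1}$ is a conformal invariant. No conformal change of metric can move it, so your proposed fix cannot work; the perturbation used in Proposition~\ref{propfund} served only to make $H_{g_0}$ non-constant, not to alter the Yamabe invariant.

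The repair is in fact simpler than your detour through Escobar's theorem applied to $g$ itself: feed the \emph{given} metric $g$ directly into the interpolation argument of Proposition~\ref{propzero} as $g_+$. Since $R_g=0$, $H_\gamma\geq 0$ and $H_\gamma\not\equiv 0$, the energy $E_g(u)$ is strictly positive for every nonzero $u$ and the infimum is achieved, so $\lambda_1(\mathcal B_g)>0$ and hence $Q^{0,1}_g(M,\partial M)>0$. Taking $g_-$ from part~$a)$ and setting $g_t=tg_-+(1-t)g$, continuity of $t\mapsto Q^{0,1}_{g_t}$ produces some $t_0$ with $0<Q^{0,1}_{g_{t_0}}<Q^{0,1}(\mathbb B,\partial\mathbb B)$. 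Escobar's existence theorem now applies to the \emph{non-conformally related} metric $g_{t_0}$, yielding a scalar-flat conformal representative with constant positive boundary mean curvature, which a homothety turns into $H\equiv 1$. This bypasses the borderline case entirely and is the route implicit in the paper's ``combining'' of the two propositions.
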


Finally, combining Proposition \ref{propfund},  Proposition \ref{tecn} and Proposition \ref{esc}, we can draw the following conclusion:

\begin{corollary}[Theorem \ref{corA} and Theorem \ref{corB}]
Let $M^n$, $n> 2$, be a manifold with smooth boundary.
\begin{itemize}
\item[i)]Any function that is negative somewhere on $\partial M$ (resp. on $M$) is a mean curvature  of a scalar flat metric (resp.  a scalar curvature a metric  whose boundary has mean curvature zero).
\item[ii)]Every  smooth function on $\partial M$ (resp. on $\partial M$) is a mean curvature  of a scalar flat metric (resp. a scalar curvature  of a metric, where the mean curvature of the boundary is zero) if and only if $ M$ admits a scalar flat metric
with positive constant mean curvature on the boundary (resp. positive constant scalar curvature and minimal boundary). 
\end{itemize}
\end{corollary}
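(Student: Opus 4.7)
The plan is to combine Proposition~\ref{propfund}---which prescribes a given curvature as soon as one can find a scalar $c>0$ placing a reference constant inside the range $[\min cH,\max cH]$---with the existence results in Proposition~\ref{tecn} that supply scalar flat references with a variety of boundary mean curvatures. Proposition~\ref{esc} enters implicitly through Proposition~\ref{tecn}, which organizes the possibilities according to the sign of $\lambda_1(\mathcal{B})$ (equivalently $\lambda_1(\mathcal{L})$).

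For item $i)$, I would begin with a scalar flat metric $g_0$ having constant \emph{negative} mean curvature $H_0<0$ on $\partial M$, furnished unconditionally by Proposition~\ref{tecn}(a). Given $H\in C^\infty(\partial M)$ with $\min H<0$, the choice $c=H_0/\min H>0$ yields $\min(cH)=H_0\leq \max(cH)$, and Proposition~\ref{propfund} then produces a scalar flat metric on $M$ realizing $H$ as the boundary mean curvature. The interior variant is identical: one uses the scalar flat reference with constant negative scalar curvature (again from Proposition~\ref{tecn}(a)) and invokes the ``$R$'' half of Proposition~\ref{propfund}.

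For the ``only if'' direction of item $ii)$, I would simply feed the constant function $1\in C^\infty(\partial M)$ into the hypothesis: the scalar flat metric realizing $H\equiv 1$ is precisely a scalar flat metric with positive constant mean curvature on $\partial M$. For the converse, suppose $M$ carries a scalar flat metric with boundary mean curvature identically equal to some $H_+>0$. Proposition~\ref{tecn}(b) then supplies both a scalar flat reference $g_1$ with $H_{g_1}\equiv 1$ and, through Proposition~\ref{propzero}, a scalar flat reference $g_\ast$ with $H_{g_\ast}\equiv 0$. Given an arbitrary $H\in C^\infty(\partial M)$, I would split into three subcases. If $\max H>0$, the scaling $c=1/\max H>0$ places the reference constant $1$ into $[\min cH,\max cH]$, so Proposition~\ref{propfund} applied to $g_1$ realizes $H$. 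If $H$ is negative somewhere, item $i)$ already applies. The remaining possibility $H\equiv 0$ is handled by the totally geodesic reference $g_\ast$ itself. The interior statement goes through verbatim with $R$ in place of $H$ and minimal boundary in place of scalar flatness.

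The main conceptual content has already been absorbed by the earlier propositions, so the obstacle at this stage is largely bookkeeping: verifying in each sign regime that an appropriate positive scaling $c$ exists with $\min(cH)\leq H_0\leq \max(cH)$ for the chosen reference $H_0$. The genuinely delicate point is the edge case $H\equiv 0$ in the converse of $ii)$, which cannot be reached by any scaling trick and requires the existence of a scalar flat metric with vanishing boundary mean curvature---precisely what Proposition~\ref{propzero} provides from the positivity hypothesis.
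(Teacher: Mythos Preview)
Your proposal is correct and follows essentially the same approach as the paper. The paper does not spell out a detailed argument for this corollary; it merely states that the result follows by combining Proposition~\ref{propfund}, Proposition~\ref{tecn}, and Proposition~\ref{esc}, and your proof is precisely a careful unpacking of that combination (your mention of Proposition~\ref{propzero} is harmless, since its conclusion is already absorbed into Proposition~\ref{tecn}(b)).
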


\section{Proof of Theorem \ref{classes} }\label{final}

Because Theorem \ref{corA} and Theorem \ref{corB},  we can be divide the class of the compact manifolds with boundary as follows:
\begin{itemize}
\item $M$ carries a scalar flat metric $g$ whose boundary has mean curvature $H_{\gamma}\geq0$ and $H_{\gamma}\not\equiv 0$ (resp. a metric with scalar curvature $R_g\geq0$ and minimal boundary).
\item $M$ carries no scalar flat metric $g$ with positive mean curvature (no metric with positive scalar curvature and minimal boundary), but do have one with 
$H_{\gamma}\equiv 0$  and $R_g\equiv 0$.
\item $M$ carries a scalar flat metric  whose mean curvature on $\partial M$  is negative
somewhere (metric with negative scalar curvature and whose boundary is minimal).
\end{itemize}

Next we will prove Theorem \ref{classes} which follows from  the topological restrictions  discussed in Section \ref{restr}. 

%\begin{theorem}[Theorem \ref{classes}]
%A compact manifolds with  boundary and dimension $n\geq 3$  can be divided into three classes:
%\begin{itemize}
%\item[a)] Any smooth function on $\partial M$ (resp. $M$) is mean curvature  of some  scalar flat metric (resp.   scalar curvature of a metric on $M$ with minimal the boundary with respect to this metric);
%\item[b)] A smooth function on $\partial M$ (resp. $M$) is mean curvature  of some  scalar flat metric on $M$ (resp.   scalar curvature of a metric with minimal the boundary with respect to this metric) if and only if it is either identically equal to zero or strictly negative somewhere; furthermore,  any scalar flat metric having zero mean curvature  is totally geodesic (resp. Ricci-flat).
%\item[c)] A smooth function on $\partial M$ (resp. $M$) is mean curvature  of some scalar flat  metric 
%(resp. scalar curvature of a metric with minimal the boundary with respect to this metric)
%if and only if is strictly negative somewhere.
%\end{itemize}
%\end{theorem}
%

\begin{proof}[Proof of Theorem \ref{classes}]
The result is an immediate consequence of Proposition \ref{propfund} and Proposition \ref{tecn}. 
However, it remains to show that if $M$ does not admit a scalar flat metric with positive mean curvature on the boundary, then any scalar curvature metric  with  zero mean curvature on the boundary has totally geodesic boundary (Similarly, we can prove the analogous if  $M$ does not admit a metric with  positive scalar curvature on the boundary and minimal boundary).

Assume that $\lambda_1(\mathcal B)=0.$ 
Let $g(t)$ be a smooth family of metrics with $g(0) = g$  and  infinitesimal variation $\frac{\partial}{\partial t}g(t)|_{t=0}=-\Pi_{g}.$ If  $\Pi_{g}\neq0,$ then it follows from  Proposition \ref{varieig} in Appendix A that
 \begin{equation}\label{qqcoisa}
 \frac{d}{dt}\lambda_1(\mathcal B)\Big|_{t=0}=\int_{M} |\Pi_{g}|^2d\sigma>0,
 \end{equation}
 where we have used  that $g$ is  scalar flat and has constant mean curvature on the boundary which implies that $\psi=1$ and $\Pi_{g}=\Pi_{\hat g}.$   So $\lambda_1(g(t))>0$ for all $t>0$ sufficiently small and  we conclude from Proposition \ref{esc} that there is a metric with positive mean curvature, which is a contradiction unless $\Pi_{g}\equiv 0$ holds. 
\end{proof}

\section{Appendix A}

Let $(M^n,g)$, $n\geq 3,$ be a  n-dimensional Riemannian manifold with boundary $\partial M\neq\emptyset.$ 
Recall  the following conformal boundary operator  $(\mathcal L,\mathcal B),$ where $\mathcal L=\Delta-\frac{n-2}{4(n-1)}R_g$ in $M$ and 
$\mathcal B=\frac{\partial }{\partial \nu}-\frac{n-2}{2}H_{\gamma}$ on $\partial M$.
Recall the associated  eigenvalue problems:
\begin{equation}\label{eing3}
\left\{\begin{matrix}
\mathcal L\varphi=\lambda_1(\mathcal L)\varphi & \textrm{in $M$}\\
\mathcal B\varphi=0 & \textrm{on $\partial M$}
\end{matrix}\right.
\end{equation}
and
\begin{equation}\label{eing4}
\left\{\begin{matrix}
\mathcal L\varphi= 0& \textrm{in $M$}\\
\mathcal B\varphi=\lambda_1(\mathcal B)\varphi & \textrm{on $\partial M$.}
\end{matrix}\right.
\end{equation}

Our next result concerns the variation of the first eigenvalues  of the boundary problems (\ref{eing3}) and (\ref{eing4}).

\begin{proposition}\label{varieig}
Let $\varphi$  and $\psi$ be normalized first eigenfunction of $(\mathcal L,\mathcal B)$   with respect to the interior and the boundary condition, respectively. Then 
\begin{itemize}
\item[a)] $\frac{d}{dt}\lambda_1(\mathcal L)\Big|_{t=0}=-\int_{M}\varphi^2 \langle h,Ric_{\overline{g}}\rangle dv;$ 
\item[b)] $\frac{d}{dt}\lambda_1(\mathcal B)\Big|_{t=0}=-\int_{M}\psi^2 \langle h, \Pi_{\tilde{\gamma}}\rangle d\sigma;$
\end{itemize}
 where $\overline{g}=\varphi^{4/(n-2)}g$ and $\tilde{g}=\psi^{4/(n-2)}g.$ 
\end{proposition}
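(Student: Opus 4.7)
Both items follow from the same Hadamard-type perturbation argument, with the interior and boundary playing symmetric roles; I sketch part (a), and (b) is strictly analogous. Let $g(t)$ be a smooth curve of metrics with $g(0)=g$ and $\dot g(0)=h$, and let $(\varphi(t),\lambda(t))$ denote the corresponding smooth family of first eigenpairs of (\ref{eing3}), normalized by $\int_M \varphi(t)^2 \,dv_{g(t)}=1$. Multiplying $\mathcal L_{g(t)}\varphi(t)=\lambda(t)\varphi(t)$ by $\varphi(t)$, integrating against $dv_{g(t)}$, and using $\mathcal B_{g(t)}\varphi(t)=0$ in one interior integration by parts yields the identity
\begin{equation*}
\lambda(t)=-\int_M |d\varphi(t)|^2_{g(t)}\,dv_{g(t)}-c\int_M R_{g(t)}\varphi(t)^2\,dv_{g(t)}+\frac{n-2}{2}\int_{\partial M} H_{g(t)}\varphi(t)^2\,d\sigma_{g(t)},
\end{equation*}
with $c=\frac{n-2}{4(n-1)}$; this expresses $\lambda(t)$ as an integral functional of the pair $(g(t),\varphi(t))$.

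Differentiating at $t=0$, I split the contributions into those involving $\dot\varphi$ and those obtained by holding $\varphi$ fixed and varying $g$. Applying Green's identity, the eigenvalue equation, and the boundary condition collapses the $\dot\varphi$-contributions into $2\lambda\int_M\varphi\dot\varphi\,dv$; differentiating the normalization gives $2\int_M\varphi\dot\varphi\,dv=-\tfrac{1}{2}\int_M\varphi^2\tr_g h\,dv$, so the total $\dot\varphi$-term reduces to the clean contribution $-\tfrac{\lambda}{2}\int_M\varphi^2\tr_g h\,dv$.

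For the metric variations I apply the formulas (\ref{prob}) for $\delta R_g[h]$ and $\delta H_\gamma[h]$, together with the pointwise identities $\delta g^{ij}=-h^{ij}$, $\delta(dv_g)=\tfrac12\tr_g h\,dv_g$, and $\delta(d\sigma_g)=\tfrac12\tr_\gamma h|_{T\partial M}\,d\sigma_g$. The $\Div_g\Div_g h$ piece of $\delta R_g[h]$ is handled by Lemma \ref{lemmadiv} with $f=\varphi^2$, and $\int_M\varphi^2\Delta\tr_g h\,dv$ is integrated by parts. Substituting $\Delta\varphi=(\lambda+cR)\varphi$ in the interior and $\partial_\nu\varphi=\tfrac{n-2}{2}H\varphi$ on the boundary, the stray $\tr_g h$ volume terms from the previous step combine into a total divergence whose boundary contribution conspires with the remaining boundary integrals (in particular the $\delta H_\gamma[h]$ integrand on $\partial M$) to cancel. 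The surviving interior integrand has the form $\langle h,\varphi^2 T_g\rangle$, where $T_g$ is a symmetric $(0,2)$-tensor built from $\Ric_g$, $\varphi^{-1}\Hess\varphi$, $\varphi^{-2}d\varphi\otimes d\varphi$, $\varphi^{-1}\Delta\varphi\,g$, and $\varphi^{-2}|d\varphi|^2 g$. Under the conformal change $\bar g=\varphi^{4/(n-2)}g$, the classical transformation formula for the Ricci tensor produces $\Ric_{\bar g}$ as precisely the same linear combination of these building blocks, and a coefficient match yields $T_g=-\Ric_{\bar g}$, giving
\begin{equation*}
\lambda'(0)=-\int_M\varphi^2\langle h,\Ric_{\bar g}\rangle\,dv.
\end{equation*}
Part (b) follows by the parallel computation, starting instead from $\int_{\partial M}\psi(t)^2\,d\sigma_{g(t)}=1$ and using the conformal transformation formula for the second fundamental form under $\tilde g=\psi^{4/(n-2)}g$ to convert the final boundary integrand into $\psi^2\langle h,\Pi_{\tilde\gamma}\rangle$.

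The main obstacle is the bookkeeping in the metric-variation step: several integrations by parts must be orchestrated so that the $\tr_g h$ and boundary contributions cancel, leaving an integrand that is manifestly the conformally transformed curvature tensor. The conceptual reason this must succeed is the conformal covariance of $(\mathcal L,\mathcal B)$: the first-order variation of $\lambda_1(\mathcal L)$ (respectively $\lambda_1(\mathcal B)$) must be expressible through curvature quantities of the natural conformal representative $\bar g$ (respectively $\tilde g$), which pins down the form of the answer and serves as a useful consistency check on the computation.
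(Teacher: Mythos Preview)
Your proposal is correct and follows essentially the same Hadamard-type perturbation argument as the paper: differentiate the first eigenvalue with respect to the metric, eliminate the contributions from $\dot\varphi$ via self-adjointness of $(\mathcal L,\mathcal B)$, compute the metric variation of the operators using (\ref{prob}) and the variation of the Laplacian, and then recognize the surviving integrand as $\Ric_{\bar g}$ (resp.\ $\Pi_{\tilde\gamma}$) by the conformal change formulas (\ref{confricci})--(\ref{confsecond}). The only organizational difference is that the paper differentiates the eigenvalue equation $\mathcal L_t\varphi(t)=\lambda(t)\varphi(t)$ directly and pairs with $\varphi$, so the $\varphi'$ terms cancel by self-adjointness and yield $\lambda'(0)=\int_M\varphi\,\mathcal L_0'\varphi\,dv$ (plus the analogous boundary contribution), whereas you differentiate the Rayleigh-type integral identity for $\lambda(t)$ and track the $\dot\varphi$ terms through the normalization; these are two equivalent packagings of the same computation.
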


\begin{proof}
Let $\varphi(t)$ and $\psi(t)$ denote the first eigenfunctions  associated to the first eigenvalues $\lambda_1(\mathcal L_t,g(t))$ and $\lambda_1(\mathcal B_t,g(t)),$ respectively. 
Taking a derivative of $\mathcal L_t\varphi(t)=\lambda_1(\mathcal L_t,g(t))\varphi(t)$ and $\mathcal B_t\psi(t)=\lambda_1(\mathcal B_t,g(t))\psi(t)$ we get that 
\begin{equation}
\mathcal L_t'\varphi(t)+\mathcal L_t\varphi'(t)=\lambda_1'(\mathcal L_t,g(t))\varphi(t)+\lambda_1(\mathcal L_t,g(t))\varphi'(t)
\end{equation}
\begin{equation}
\mathcal B_t'\psi(t)+\mathcal B_t\psi'(t)=\lambda'_1(\mathcal B_t,g(t))\psi(t)+\lambda_1(\mathcal B_t,g(t))\psi'(t)
\end{equation}
where the prime denotes derivatives with respect to $t.$

Then setting $t=0$ and using the divergence theorem we obtain that 
$$\frac{d}{dt}\lambda_1(\mathcal L)\Big|_{t=0}=\int_{M} \langle \varphi(0),\mathcal L_0'\varphi(0)\rangle dv$$
 and $$\frac{d}{dt}\lambda_1(\mathcal B)\Big|_{t=0}=\int_{\partial M} \langle \psi(0),\mathcal B_0'\psi(0)\rangle d\sigma.$$

The results now follow  by a straightforward computation using (\ref{prob}) and the following identities:
\begin{equation}
\Big(\frac{d}{dt}\Delta_t\big|_{t=0}\Big)f=-\langle h,Hess_g\;f\rangle-\frac{1}{2}\langle \nabla(\tr h),\nabla\;f\rangle+\omega(\nabla f),
\end{equation}

\begin{equation}\label{confricci}
\Ric_{\widehat{g}}=\Ric_g -(n-2)\Hess_g\;f +(n-2)|\nabla f|^2-(\Delta f+(n-2)|\nabla f|^2)g
\end{equation}
and 
\begin{equation}\label{confsecond}
\Pi_{\widehat{g}}=e^f\Pi_{g}+\frac{\partial}{\partial \nu}(e^f)g,
\end{equation}
where $\widehat{g}=e^{2f}g.$ We remark that these conformal change formulae (\ref{confricci}) and (\ref{confsecond}) can be found  at \cite{E1}.

\end{proof}

%%%%%%%%%%%%%%%%%%%%%%%%%%%%%%%%%%%%%%%

%%%%%%%%%%%%%%%%%%%%%%%%%%%%%%%%%%%

\bibliographystyle{amsplain}

\end{document}